\newtheorem{theorem}{Theorem}[section]
\newtheorem{lemma}[theorem]{Lemma}
\newtheorem{proposition}[theorem]{Proposition}
\newtheorem{corollary}[theorem]{Corollary}
\theoremstyle{definition}
\newtheorem{definition}[theorem]{Definition}
\theoremstyle{remark}
\newtheorem{remark}[theorem]{Remark}
\numberwithin{equation}{section}
\begin{document}
	\author{Stefan  Ivkovi\'{c}}

	\vspace{15pt}
	
	\title{Dynamics of operators on the space of Radon measures}

	\maketitle
	
		\begin{center}
			Mathematical Institute of the Serbian Academy of Sciences and Arts,  Kneza Mihaila 36,  11000 Beograd,Serbia
		\end{center}
	\begin{abstract}
		In this paper, we study the dynamics of the adjoint of a weighted composition operator and we give necessary and sufficient conditions for this adjoint operator to be topologically hyper-transitive on the space of Radon measures on a locally compact Hausdorff space. Moreover, we provide sufficient conditions for this operator to be chaotic and we give concrete examples. Next, we consider the real Banach space of signed Radon measures and we give in this context the sufficient conditions for the convergence of Markov chains induced by the adjoint of an integral operator. Also, we illustrate this result by a concrete example. In addition, we obtain some structural results regarding the space of Radon measures. We characterize a class of cones whose complement is spaceable in the space of Radon measures.
	\end{abstract}
	
	\vspace{15pt}
	
	\begin{flushleft}
		\textbf{Keywords} Radon measures, total variation norm, topologically transitivity, chaos, cone, order unit, Thomson`s norm, abstract simplex, Markov chain 
	\end{flushleft} 
	
	\vspace{15pt}
	
	\begin{flushleft}
		\textbf{Mathematics Subject Classification (2010)} Primary MSC 47A16
	\end{flushleft}
	
	\vspace{30pt}

\section{Introduction}
\label{sec:1}
Linear dynamics of operators have been studied in many articles during several decades; see \cite{gpbook} and \cite{bmbook} as monographs on this topic.
Among other concepts, hypercyclicity, topological transitivity and topological mixing, as important linear dynamical properties of bounded linear operators, have been investigated in many research works; see \cite{an97, bg99, kuchen17, shaw, IT, Filomat, kostic} and their references.
Specially, hypercyclic weighted shifts on $\ell^p(\mathbb{Z})$ were characterized in \cite{sa95,ge00}, and then  C-C. Chen and 
C-H. Chu, using aperiodic elements of locally compact groups, extended the results in \cite{sa95} to weighted translations on Lebesgue spaces in the context of a second countable group \cite{cc11}. Afterwards, many other linear dynamics in connection with this theme have been studied; see \cite{chen11, chen141, IT2}. Recently, the dynamics of weighted composition operators on solid Banach function spaces has been studied in \cite{ tsi, si09, sts} .

Now, for a locally compact Hausdorff space $\Omega ,$ we can consider $C_{0}(\Omega)$  as a special kind of a solid Banach function space. Here, as usual, $C_{0}(\Omega)$ is the space of all continuous functions on $\Omega$ vanishing at infinity, which is equipped with the supremum norm.  It is well known in functional analysis that the dual of $C_{0}(\Omega) $ is isometrically isomorphic to the space of Radon measures on $\Omega$ equipped with the total variation norm. Hence, the adjoint of a weighted composition operator on $C_{0}(\Omega) $ can be considered as an operator on the space of Radon measures on $\Omega.$ In Section 3 of this paper we study the dynamics of this adjoint operator and provide conditions that ensure topological transitivity and chaos for this operator and for the cosine operator function generated by this operator. Moreover, we give concrete examples. 

Another essential topic in connection with the dynamics of operators is fixed point theory, which can in fact be viewed as an opposite aspect to hypercyclicity and topological transitivity. An important part of the fixed point theory deals with the convergence of Markov chains and many papers have been published on this theme, such as \cite{francuzi}. In Section 4 we consider a Markov  integral-operator that acts on the real Banach space of all real valued continuous functions on a compact Hausdorff space $ \Omega $ equipped with the supremum norm. The adjoint of this operator acts then on the real Banach space of all signed Radon measures on $ \Omega $ equipped with the total variation norm. Based on the main general result in \cite{francuzi}, we provide sufficient conditions on the integral kernel of this operator that ensure the convergence of the Markov chains induced by the ajoint of this operator. Also, we illustrate our result by a concrete example.

Now, the theory of Markov operators rely heavily on the concept of cones in Banach spaces. This fact motivated us to present some structural results concerning the cones in the space of Radon measures. Recall that a subset of a topological vector space is called spaceable 
if its union with the singleton $\{0\}$ contains a closed infinite-dimensional linear subspace.  This notion was introduced in \cite{fon,aro} and so far has been considered in several papers, such as \cite{MIA}. In Section 5,  by applying \cite[Theorem 4.2]{MIA},  we  characterize a class of cones whose complement is spaceable in the space of Radon measures on locally compact Hausdorff space $\Omega .$ As a corollary, we obtain that the complement of the cone consisting of all  positive Radon measures on $\Omega$ is spaceable when $ \Omega $ has infinite cardinality. 

\section{Preliminaries}
\label{sec:2}
If $\mathcal X$ is a Banach space, the set of all bounded linear operators from $\mathcal X$ into $\mathcal X$ is denoted by $B(\mathcal X)$. Also, we denote $\mathbb{N}_0:=\mathbb{N}\cup\{0\}$.\\

We recall now the following two definitions. The first definition is an extension of \cite[Definition 2.1]{tsi}.

\begin{definition} 
	Let $\mathcal X$ be a Banach space. A sequence $(T_n)_{n\in \mathbb{N}_0}$  of operators in $B(\mathcal X)$ is called {\it topologically transitive} if for each non-empty open subsets $U,V$ of
	${\mathcal X}$, $T_n(U)\cap V\neq \varnothing$ for some $n\in \mathbb{N}$.\\
	A sequence $(T_n)_{n\in \mathbb{N}_0}$  of operators in $B(\mathcal X)$ is called {\it topologically hyper-transitive} if for each non-empty open subsets $U,V$ of
	${\mathcal X}$, there exists a strictly increasing sequence  $\lbrace n_{k} \rbrace_{k} \subseteq \mathbb{N}$ such that  $T_{n_{k}}(U)\cap V\neq \varnothing$ for all $k\in \mathbb{N}$. If $T_n(U)\cap V\neq \varnothing$ holds from some $n$ onwards, then
	$(T_n)_{n\in \mathbb{N}_0}$ is called {\it topologically mixing}.\\
	A single operator $T$ in $B(\mathcal X)$ is called topologically transitive (respectively hyper-transitive or mixing) if the sequence $(T^n)_{n\in \mathbb{N}_0}$ is topologically transitive (respectively hyper-transitive or mixing). 
\end{definition}

\begin{definition} \cite[Definition 2.3]{tsi}
	Let $\mathcal X$ be a Banach space, and $(T_n)_{n\in \mathbb{N}_0}$  be a sequence of operators in $B(\mathcal X)$. A vector $x\in \mathcal X$ is called a {\it periodic element} of $(T_n)_{n\in \mathbb{N}_0}$ if there exists a constant $N\in\mathbb N$ such that for each $k\in\mathbb N$, $T_{kN}x=x$. The set of all periodic elements of $(T_n)_{n\in \mathbb{N}_0}$ is denoted by
	${\mathcal P}((T_n)_{n\in \mathbb{N}_0})$. The sequence $(T_n)_{n\in \mathbb{N}_0}$ is called {\it chaotic} if $(T_n)_{n\in \mathbb{N}_0}$ is topologically transitive and ${\mathcal P}((T_n)_{n\in \mathbb{N}_0})$ is dense in ${\mathcal X}$. An operator $T\in B(\mathcal{X})$ is called \emph{chaotic} if the sequence $\{T^n\}_{n\in \mathbb{N}_0}$ is chaotic. 
\end{definition}

The following concept plays a key role in the proofs.
\begin{definition} \cite[Definition 2.6]{tsi}
	Let $X$ be a topological space. Let $\alpha:X\longrightarrow X$ be invertible, and $\alpha,\alpha^{-1}$ be Borel measurable. We say that $\alpha$ is \emph{aperiodic} if for each compact subset $K$ of $X$, there exists a constant $N>0$ such that for each $n\geq N$, we have $K \cap \alpha^{n}(K)=\varnothing$, where $\alpha^{n}$ means the $n$-fold combination of $\alpha$. 
\end{definition}

Throughout this paper we let $\Omega $ be a locally compact Hausdorff space and $\alpha $ be an aperiodic homeomorphism of $\Omega $ in the case when $ \Omega$ is non-compact. As usual, $C_{0}(\Omega) $  denotes the space of all continuous functions on $\Omega$ vanishing at infinity, $C_{b}(\Omega)$ denotes the space of bounded, continuous functions on $\Omega$, whereas $C_{c}(\Omega)$ stands for the set of all continuous, compactly supported functions on $\Omega$. Both $C_{0}(\Omega) $ and $C_{b}(\Omega)$ are equipped with the supremum norm. Moreover, we let $w$ be a positive bounded function on $\Omega $ such that also $w^{-1} \in C_{b}(\Omega)$ and we put then $T_{\alpha,w} $ to be the weighted composition operator on $C_{0}(\Omega) $ with respect to $\alpha $ and $w ,$ that is $T_{\alpha , w} (f)=w \cdot (f \circ \alpha) $ for all $ f \in C_{0} (\Omega).$ Easily, one can see that by the above assumptions $T_{\alpha , w}$ is well-defined and $\|T_{\alpha,w}\|\leq \|w\|_{\sup}$.  
Since $\frac{1}{w}$ is also bounded, then $T_{\alpha , w}$ is invertible and we have 
\begin{equation*}
	T_{\alpha , w}^{-1}f=\frac{f\circ \alpha^{-1}}{w\circ \alpha^{-1}},\quad( f\in  C_{0}(\Omega)).
\end{equation*}
Simply we denote $S_{\alpha , w}:=T_{\alpha , w}^{-1}$.
\begin{remark} \cite[Remark 2.5]{tsi}
	If $w$ and $\frac{1}{w}$ are weights, the inverse of a weighted composition operator  $T_{\alpha,w}$ is also a weighted composition operator. In fact, $ S_{\alpha , w}=T_{\alpha^{-1}, \frac{1}{w \circ \alpha^{-1}}}$. Moreover, if $T_{\alpha_{1}, w_{1}} $ and $T_{\alpha_{2}, w_{2}}  $ are two weighted composition operators, then 
	$$T_{\alpha_{2}, w_{2}}\circ T_{\alpha_{1}, w_{1}} = T_{\alpha_{1}  \circ \alpha_{2} , w_{2}(w_{1} \circ \alpha_{2})},$$ so the composition of two weighted composition operators is again a weighted composition operator.
	By some calculation one can see that for each $n\in \mathbb{N}$ and $f\in C_{0}(\Omega)$,
	$$T^{n}_{\alpha,w}f =\left(\prod_{j=0}^{n-1}(w\circ \alpha^{j})\right) \cdot (f\circ \alpha^{n}) \text{    (1)}$$
	and 
	$$S^{n}_{\alpha,w}f =\left(\prod_{j=1}^{n}(w\circ \alpha^{-j})\right)^{-1}\cdot (f\circ \alpha^{-n}). \text{    (2)}$$
\end{remark}

The adjoint $T_{\alpha,w}^{*} $ is a bounded operator on $M(\Omega) $ where $M(\Omega) $ stands for the Banach space of all Radon measures on $\Omega $ equipped with the total variation norm. It is straightforward to check that $$T_{\alpha,w}^{*}(\mu)(E) = \int_{E} w \circ \alpha^{-1} \,d\mu\ \circ \alpha^{-1} $$ 
for every $\mu \in M(\Omega) $ and every measurable subset $E$ od $\Omega .$ Here $\mu\ \circ \alpha^{-1} (E) = \mu ( \alpha^{-1} (E)) $ for every  $\mu \in M(\Omega) $ and every measurable subset $E$ od $\Omega .$  By $(1)$ and $(2)$ it follows then that for every $n \in \mathbb{N} ,$ $\mu \in M(\Omega)  $ and a Borel measurable subset $E \subseteq \Omega $ we have that 
$${T_{\alpha,w}^{*n}}(\mu)(E) = \int_{E}  \prod_{j=0}^{n-1} w \circ \alpha^{j-n} \,d\mu\ \circ \alpha^{-n} $$ 
and 
$${S_{\alpha,w}^{*n}}(\mu)(E) = \int_{E} \prod_{j=1}^{n} (w \circ \alpha^{n-j})^{-1} \,d\mu\ \circ \alpha^{n}.$$ 

Throughout this paper, $M_+(\Omega)$ will denote the set of all non-negative Radon measures on $\Omega .$ For every Radon measure $ \mu $ on $ \Omega ,$ we let as usual $ \vert \mu \vert $ denote the total variation of $ \mu .$

\section{Topologically transitive operators on the space of Radon measures}
\label{sec:3}
We start with the following proposition. 

\begin{proposition} \label{r12 p3.1 }
	The following statements are equivalent.\\
	i) $T_{\alpha,w}^{*} $ is topologically hyper-transitive on $M(\Omega) .$\\
	ii) For every compact subset $K$ of $ \Omega$ and any two measures $\mu, v$ in $M(\Omega) $ with $\vert v \vert (K^{c})= \vert \mu \vert (K^{c})=0  $ there exist a strictly increasing sequence $\lbrace n_{k} \rbrace_{k} \subseteq \mathbb{N}  $ and sequences $ \lbrace A_{k} \rbrace,$ $\lbrace B_{k} \rbrace $ of Borel subsets of $K$ such that $\alpha^{n_{k}} (K) \cap K =\varnothing $ for all $ k \in \mathbb{N}$ and 
	$$\lim_{k \to \infty} \vert \mu \vert (A_{k} ) = \lim_{k \to \infty} \vert v \vert (B_{k} ) =0   ,$$
	$$ \lim_{k \to \infty} \sup_{t\in K \cap A_{k}^{c}} ( \prod_{j=0}^{n_{k}-1} (w \circ \alpha^{j})(t)) = \lim_{k \to \infty} \sup_{t\in K \cap B_{k}^{c}} ( \prod_{j=1}^{n_{k}} (w \circ \alpha^{-j})^{-1}(t)) = 0   .$$
\end{proposition}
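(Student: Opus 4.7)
The plan is to prove both implications. The direction ii) $\Rightarrow$ i) will follow a Kitai/hypercyclicity-criterion style construction, while i) $\Rightarrow$ ii) requires extracting measure-theoretic information from the hyper-transitivity witnesses.

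For ii) $\Rightarrow$ i), given non-empty open sets $U, V \subseteq M(\Omega)$, I would first use the density of compactly supported Radon measures in $M(\Omega)$ (which follows from the inner regularity of finite Radon measures) to select $\mu \in U$ and $v \in V$ supported in a common compact set $K$, so that $|\mu|(K^c) = |v|(K^c) = 0$. Applying ii) to the triple $(K, \mu, v)$ yields sequences $n_k, A_k, B_k$ with the stated properties. I would then define the trial measures
$$\eta_k := \mu|_{K \cap A_k^c} + S_{\alpha,w}^{*n_k}\bigl(v|_{K \cap B_k^c}\bigr).$$
Since $T_{\alpha,w}^{*n_k} \circ S_{\alpha,w}^{*n_k} = \mathrm{id}_{M(\Omega)}$, this gives $T_{\alpha,w}^{*n_k}(\eta_k) = T_{\alpha,w}^{*n_k}(\mu|_{K \cap A_k^c}) + v|_{K \cap B_k^c}$. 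Using the explicit formulas for $T_{\alpha,w}^{*n}$ and $S_{\alpha,w}^{*n}$ recorded in Section 2, the total variation bounds
$$\bigl\|T_{\alpha,w}^{*n_k}(\mu|_{K \cap A_k^c})\bigr\| \leq \|\mu\| \sup_{t \in K \cap A_k^c}\prod_{j=0}^{n_k-1}(w \circ \alpha^{j})(t),$$
and the symmetric estimate for $S_{\alpha,w}^{*n_k}(v|_{K \cap B_k^c})$, both tend to $0$ by hypothesis. Combining this with $|\mu|(A_k), |v|(B_k) \to 0$, one obtains $\eta_k \to \mu$ and $T_{\alpha,w}^{*n_k}(\eta_k) \to v$ in total variation, so $\eta_k \in U$ and $T_{\alpha,w}^{*n_k}(\eta_k) \in V$ for large $k$, giving hyper-transitivity.

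For i) $\Rightarrow$ ii), fix a compact $K$ and measures $\mu, v$ with $|\mu|(K^c) = |v|(K^c) = 0$. The aperiodicity of $\alpha$ provides some $N_0$ such that $\alpha^n(K) \cap K = \varnothing$ for all $n \geq N_0$. Applying hyper-transitivity to the open balls $B(\mu, 1/k)$ and $B(v, 1/k)$, and passing to a strictly increasing subsequence with $n_k \geq N_0$, one extracts measures $\eta_k$ satisfying $\|\eta_k - \mu\| < 1/k$ and $\|T_{\alpha,w}^{*n_k}(\eta_k) - v\| < 1/k$. Decomposing $\eta_k = \eta_k|_K + \eta_k|_{K^c}$ and observing that $T_{\alpha,w}^{*n_k}(\eta_k|_K)$ is supported on $\alpha^{n_k}(K)$, which is disjoint from $K$, the additivity of total variation across this disjoint partition permits one to derive that the weight products $f_{n_k}(t) := \prod_{j=0}^{n_k-1} w(\alpha^j(t))$ tend to $0$ in $|\mu|$-measure on $K$, and symmetrically $g_{n_k}(t) := \prod_{j=1}^{n_k} w(\alpha^{-j}(t))^{-1}$ tends to $0$ in $|v|$-measure on $K$. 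Then setting $A_k := \{t \in K : f_{n_k}(t) > \varepsilon_k\}$ and $B_k := \{t \in K : g_{n_k}(t) > \varepsilon_k\}$ for a sufficiently slowly decreasing $\varepsilon_k \to 0$ produces sets verifying all the conditions in ii).

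The main obstacle I anticipate is the convergence-in-measure step in the converse direction: because no a priori bound on $\|T_{\alpha,w}^{*n_k}\|$ is available, one cannot simply apply $T_{\alpha,w}^{*n_k}$ to the difference $\eta_k - \mu$. Instead the argument must route through the $K/K^c$ split of the image measure and combine the support separation supplied by aperiodicity with a careful comparison of the $K^c$-part of $\eta_k$ (which tends to $0$ in norm) against its image on $K$ (which tends to $v$). The sufficiency direction is, by contrast, a direct computation once the correct trial measures are written down.
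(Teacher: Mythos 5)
Your proposal is correct and follows essentially the same route as the paper: the ii)$\Rightarrow$i) direction uses the identical trial measures $\eta_k=\mu|_{K\cap A_k^c}+S_{\alpha,w}^{*n_k}(v|_{K\cap B_k^c})$ with the same total-variation estimates, and the i)$\Rightarrow$ii) direction is the same Chebyshev-type argument (the paper picks the explicit thresholds $1/2^k$ against accuracy $1/4^k$ and passes through the Jordan decomposition to justify that the total variation of the image measure is the image of the total variation, which is the precise content of your ``additivity across the disjoint partition'' step). The only cosmetic difference is that you phrase the key estimate as convergence in $|\mu|$-measure with a slowly decreasing cutoff $\varepsilon_k$ rather than fixing the cutoff sets outright.
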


\begin{proof}
	We prove $i) \Rightarrow ii)  $ first. Suppose that $T_{\alpha,w}^{*} $ is topologically hyper-transitive on $ M(\Omega).$ For a given compact subset $K$ of $\Omega,$ let $v, \mu \in M(\Omega) $ such that $\vert v \vert (K^{c})= \vert \mu \vert (K^{c})=0 .$ Since $ T_{\alpha,w}^{*} $ is topologically hyper-transitive, for each  $ k \in \mathbb{N}$ we can find some $\eta^{(k)} \in M(\Omega)  $ and a strictly increasing sequence $ \lbrace n_{k} \rbrace_{k \subseteq \mathbb{N}} $ such that $$\vert \eta^{(k)} -\mu \vert (\Omega) < \dfrac{1}{4^{k}} \textrm{ and } \vert \gamma^{(k)} -v \vert (\Omega) < \dfrac{1}{4^{k}} ,$$ where $\gamma^{(k)} = T_{\alpha,w}^{*n_{k}} (\eta^{(k)})  ,$ that is 
	$$\gamma^{(k)} (E) =  \int_{E} \prod_{j=0}^{n_{k}-1} (w \circ \alpha^{j-n_{k}} ) \,d \eta^{(k)} \ \circ \alpha^{-n_{k}}  $$ 
	for every measurable subset $E$ of $\Omega .$ Hence we must have that $ \vert \eta^{(k)} \vert (K^{c})  < \dfrac{1}{4^{k}}  $ and $ \vert \gamma^{(k)} \vert (K^{c})  < \dfrac{1}{4^{k}}  .$ Also we have that $ S_{\alpha,w}^{*n_{k}} (\gamma^{(k)}) = \eta^{(k)}  ,$ that is 
	$$  \int_{E} \prod_{j=1}^{n_{k}} (w \circ \alpha^{n_{k}-j} )^{-1} \,d \gamma^{(k)} \ \circ \alpha^{n_{k}} = \eta^{(k)}(E) $$ 
	for every measurable subset $E$ of $\Omega .$\\
	Now, since $\alpha $ is aperiodic, we may in fact choose the sequence $\lbrace n_{k} \rbrace  $ is a such way that $\alpha^{n_{k}} (K) \cap K =\varnothing $ and $\alpha^{-n_{k}} (K) \cap K =\varnothing $ for all $k.$
	
	If $\eta^{(k)} = \eta_{1,+}^{(k)} - \eta_{1,-}^{(k)} + i \eta_{2,+}^{(k)} -i \eta_{2,-}^{(k)} $ and $\gamma^{(k)} = \gamma_{1,+}^{(k)} -\gamma_{1,-}^{(k)} + i \gamma_{2,+}^{(k)} -i \gamma_{2,-}^{(k)} $ are Jordan decompositions of $\eta^{(k)} $ and $\gamma^{(k)} ,$ it follows that $( \eta_{1,+}^{(k)} + \eta_{1,-}^{(k)}) (K^{c})  < \dfrac{1}{4^{k}}   ,$ $ ( \eta_{2,+}^{(k)} + \eta_{2,-}^{(k)}) (K^{c})  < \dfrac{1}{4^{k}}  ,$ $( \gamma_{1,+}^{(k)} + \gamma_{1,-}^{(k)}) (K^{c})  < \dfrac{1}{4^{k}}    $  and $( \gamma_{2,+}^{(k)} + \gamma_{2,-}^{(k)}) (K^{c})  < \dfrac{1}{4^{k}}   $ Also, it is easy to see that 
	$$ \gamma_{j,\pm}^{(k)} (E) = \int_{E} \prod_{i=0}^{n_{k}-1} (w \circ \alpha^{i-n_{k}} ) \,d \ \eta_{j, \pm}^{(k)} \ \circ \alpha^{-n_{k}}  $$ 
	and 
	$$\eta_{j,\pm}^{(k)} (E) = \int_{E} \prod_{i=1}^{n_{k}} (w \circ \alpha^{n_{k}-i} )^{-1} \,d \ \gamma_{j, \pm}^{(k)} \ \circ \alpha^{n_{k}} $$ 
	for every measurable subset $E$ of $\Omega $ and $j \in \lbrace 1,2 \rbrace .$ This is because $ \alpha$ is a homeomorphism of $\Omega ,$ so taking compositions with $\alpha $ preserves mutual singularity of measures.  Hence, for $j \in \lbrace 1,2 \rbrace $ we get that 
	
	$$\int_{\alpha^{n_{k}} (K)} \prod_{i=0}^{n_{k}-1} (w \circ \alpha^{i-n_{k}} ) \,d \ ( \eta_{j, +}^{(k)}+ \eta_{j, -}^{(k)} )  \ \circ \alpha^{-n_{k}}  < \dfrac{1}{4^{k}}    ,$$ 
	and
	$$\int_{\alpha^{-n_{k}} (K)} \prod_{i=1}^{n_{k}} (w \circ \alpha^{n_{k}-i} )^{-1} \,d \ ( \gamma_{j, +}^{(k)}+ \gamma_{j, -}^{(k)} )  \ \circ \alpha^{n_{k}}  < \dfrac{1}{4^{k}}    .$$ 
	By the change of variables we obtain that 
	$$\int_{K} \prod_{i=0}^{n_{k}-1} (w \circ \alpha^{i} ) \,d \ ( \eta_{j, +}^{(k)}+ \eta_{j, -}^{(k)} )    < \dfrac{1}{4^{k}}    ,$$ 
	and 
	$$\int_{K} \prod_{i=1}^{n_{k}} (w \circ \alpha^{-i} )^{-1} \,d \ ( \gamma_{j, +}^{(k)}+ \gamma_{j, -}^{(k)} )   < \dfrac{1}{4^{k}}    .$$  
	for $j \in \lbrace 1,2 \rbrace .$ Let 
	$$A_{k}:= \lbrace t \in K \ \vert \ \prod_{i=0}^{n_{k}-1} (w \circ \alpha^{i} ) (t)  >  \dfrac{1}{2^{k}} \rbrace$$ 
	and 
	$$B_{k}:= \lbrace t \in K \ \vert \ \prod_{i=1}^{n_{k}} (w \circ \alpha^{-i} )^{-1}(t) >  \dfrac{1}{2^{k}} \rbrace .$$ 
	Then $  ( \eta_{j, +}^{(k)}+ \eta_{j, -}^{(k)} )  (A_{k})  < \dfrac{1}{2^{k}} $ and $ ( \gamma_{j, +}^{(k)}+ \gamma_{j, -}^{(k)} )  (B_{k})  < \dfrac{1}{2^{k}}  $ for $ j \in \lbrace 1,2 \rbrace.$ Hence $(\eta^{(k)}) (A_{k})  < \dfrac{1}{2^{k-1}} $ and $(\gamma^{(k)}) (B_{k})  < \dfrac{1}{2^{k-1}} .$ Therefore, $ \vert \mu \vert (A_{k}) < \dfrac{1}{2^{k-1}} + \dfrac{1}{4^{k}}  $ and $ \vert v \vert (B_{k}) < \dfrac{1}{2^{k-1}} + \dfrac{1}{4^{k}} .$ 
	
	Next we prove $ii) \Rightarrow i) .$ Given two open subsets $ O_{1}$ and $ O_{2} $ of $M(\Omega) ,$ choose some $\mu \in  O_{1} $ and $ v \in  O_{2}.$ By the regularity of $\mu $ and $v$ there exist some compact subsets $K_{1} $ and $K_{2} $ of $\Omega $ such that $ \vert \mu \vert (K_{1}^{c})< \dfrac{\epsilon}{2}$ and $ \vert v \vert (K_{2}^{c})< \dfrac{\epsilon}{2}$ for sufficiently small $ \epsilon$ satisfying that $\epsilon -$neighbourhoods of $\mu $ and $v$ are contained in $O_{1} $ and $O_{2} ,$ respectively. Set $K=K_{1} \cup K_{2} .$ Then $K$ is compact and $\vert \mu \vert (K^{c}),\vert v \vert (K^{c}) < \dfrac{\epsilon}{2} .$Therefore, we may without loss of generality, assume that there exists some compact subset $ K \subseteq \Omega$ such that $\vert \mu \vert (K^{c})=\vert v \vert (K^{c}) =0 .$
	
	Choose the strictly increasing sequence $ \lbrace n_{k} \rbrace_{k}$ and the sequences $ \lbrace A_{k} \rbrace_{k} ,  \lbrace B_{k} \rbrace_{k} $ of Borel subsets of $K$ satisfying the assumptions in $ii)$ with respect to $\mu $ and $v.$ For each $k \in \mathbb{N} ,$ let 
	$\tilde{\mu}_{k}, \tilde{v}_{k} $ be the measures in $M(\Omega) $ given by $\tilde{\mu}_{k}(E)=\mu (E \cap A_{k}^{c}  \cap K )$ and $\tilde{v}_{k} (E)=v (E \cap B_{k}^{c}  \cap K ) $ for every measurable subset $E$ of $\Omega .$ Then 
	$$T_{\alpha,w}^{*n_{k}} ( \tilde{\mu}_{k} ) (E)  =  \int_{E} \prod_{j=0}^{n_{k}-1} (w \circ \alpha^{j-n_{k}} ) \,d \tilde{\mu}_{k} \ \circ \alpha^{-n_{k}}$$ 
	for every measurable subset $E$ of $\Omega .$	However, 
	$$ \int_{E} \prod_{j=0}^{n_{k}-1} (w \circ \alpha^{j-n_{k}} ) \,d \tilde{\mu}_{k} \ \circ \alpha^{-n_{k}} = 
	\int_{  E \cap \alpha^{n_{k}} (A_{k}^{c} \cap K )  } \prod_{j=0}^{n_{k}-1} (w \circ \alpha^{j-n_{k}} ) \,d \tilde{\mu}_{k} \ \circ \alpha^{-n_{k}}
	=$$
	$$\int_{  \alpha^{-n_{k}}   (E) \cap A_{k}^{c} \cap K  } \prod_{j=0}^{n_{k}-1} (w \circ \alpha^{j} ) \,d \tilde{\mu}_{k}, 
	$$ 
	so we get that 
	$$ \vert   T_{\alpha,w}^{*n_{k}} ( \tilde{\mu}_{k} ) (E) \vert \leq \sup_{t \in A_{k}^{c} \cap K}     \prod_{j=0}^{n_{k}-1} (w \circ \alpha^{j} )(t)  \vert \tilde{\mu}_{k}  \vert ( \alpha^{-n_{k}}   (E) \cap A_{k}^{c} \cap K ) = $$ 
	$$
	\sup_{t \in A_{k}^{c} \cap K}     \prod_{j=0}^{n_{k}-1} (w \circ \alpha^{j} )(t)  \vert \mu  \vert ( \alpha^{-n_{k}}   (E) \cap A_{k}^{c} \cap K )
	$$
	for every measurable subset $E$ of $\Omega .$ It follows that 
	$$ 
	\parallel  T_{\alpha,w}^{*n_{k}} ( \tilde{\mu}_{k} ) \parallel  \leq \sup_{t \in A_{k}^{c} \cap K}   (  \prod_{j=0}^{n_{k}-1} (w \circ \alpha^{j} )(t) \ ) \parallel \mu  \parallel \text{    (3) }
	$$ for all $k,$ so $$\lim_{k \to \infty}  T_{\alpha,w}^{*n_{k}} ( \tilde{\mu}_{k} ) = 0 .$$ 
	Similarly, we can show that
	$$\parallel  S_{\alpha , w}^{*^{n_{k}}} (\tilde{v}_{k}) \parallel \leq 
	\sup_{t\in  B_{k}^{c} \cap K} 
	\text{ } ( \prod_{j=1}^{n_{k}} (w \circ \alpha^{-j})^{-1} (t) ) \parallel v \parallel  \text{  (4) }$$
	for all $k,$ so 
	$$ \lim_{k \to \infty}  S_{\alpha,w}^{*n_{k}} ( \tilde{v}_{k} ) = 0 .$$ Moreover, we have that $( \mu - \tilde{\mu}_{k}) (E) = \mu (E \cap A_{k})$ hence $\vert \mu - \tilde{\mu}_{k} \vert (E) \leq \vert \mu \vert (E \cap A_{k}) $ for all $k$ and every measurable subset $E$ of $ \Omega ,$ which gives $ \vert \mu - \tilde{\mu}_{k} \vert (\Omega) \leq \vert \mu \vert (A_{k}) $ for all $k.$ Thus, 
	$\lim_{k \to \infty} \parallel \mu - \tilde{\mu}_{k} \parallel = 0 .$ Similarly we can show that
	$$\lim_{k \to \infty} \parallel v - \tilde{v}_{k} \parallel = 0 .$$ 
	
	For each $ k \in \mathbb{N},$ put 
	$\eta_{k} = \tilde{\mu}_{k} +  S_{\alpha,w}^{*n_{k}} ( \tilde{v}_{k} ) .$ 
	Then we have that $\lim_{k \to \infty}  \eta_{k} = \mu $ and $\lim_{k \to \infty}   T_{\alpha,w}^{*n_{k}}  (\eta_{k})=v,$ which shows that $T_{\alpha,w}^{*} $ is topologically hyper-transitive.
\end{proof}

\begin{corollary} \label{r12 c1.2}
	We have that $ii) \Rightarrow i) $\\
	$i) $ $T_{\alpha,w}^{*} $is topologically hyper-transitive on  $M(\Omega) .$\\
	$ii)$ For every compact subset $K$ of $\Omega $ we have that 
	$$\lim_{n \to \infty} \ \sup_{t \in K} ( \prod_{j=0}^{n-1} (w \circ \alpha^{j} )(t)) =
	\lim_{n \to \infty} \ \sup_{t \in K} ( \prod_{j=1}^{n} (w \circ \alpha^{-j} )^{-1} (t)) =0  .$$ 
\end{corollary}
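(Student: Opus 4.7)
The plan is to deduce this directly from Proposition \ref{r12 p3.1 }. Since the statement says $ii) \Rightarrow i)$, I only need to check that condition $ii)$ of the corollary implies condition $ii)$ of the proposition, and then conclude topological hyper-transitivity.

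So I would start with an arbitrary compact $K \subseteq \Omega$ and arbitrary $\mu, v \in M(\Omega)$ such that $\vert \mu \vert (K^c) = \vert v \vert (K^c) = 0$. The key simplification is that the uniform decay hypothesis of the corollary lets me take the exceptional Borel sets to be empty. Concretely, I would set $A_k = B_k = \varnothing$ for every $k$, so that $\vert \mu \vert (A_k) = 0$ and $\vert v \vert (B_k) = 0$ trivially. Then $K \cap A_k^c = K \cap B_k^c = K$, hence
$$\sup_{t \in K \cap A_k^c} \prod_{j=0}^{n_k-1} (w \circ \alpha^j)(t) = \sup_{t \in K} \prod_{j=0}^{n_k-1} (w \circ \alpha^j)(t),$$
and similarly for the product involving $\alpha^{-j}$. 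By the hypothesis $ii)$ of the corollary, both of these tend to $0$ as $n_k \to \infty$.

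It remains only to select a strictly increasing sequence $\{n_k\}_k \subseteq \mathbb{N}$ with $\alpha^{n_k}(K) \cap K = \varnothing$ for every $k$. This is immediate from the aperiodicity of $\alpha$: there exists $N$ such that $\alpha^n(K) \cap K = \varnothing$ for all $n \geq N$, so I can take $n_k = N + k - 1$. With this choice, all hypotheses of condition $ii)$ of Proposition \ref{r12 p3.1 } are fulfilled with respect to $\mu$ and $v$, and the proposition yields topological hyper-transitivity of $T_{\alpha,w}^*$ on $M(\Omega)$.

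There is essentially no obstacle here: the corollary is a clean specialization in which the uniform estimate on $K$ obviates the need to discard a small part of $\mu$ and $v$ via the sets $A_k, B_k$. The only mild subtlety is that Proposition \ref{r12 p3.1 } requires the measures supported on $K$, but this reduction was already carried out inside the proof of that proposition via regularity, so it does not reappear here.
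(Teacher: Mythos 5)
Your proposal is correct and is precisely the intended derivation: Corollary \ref{r12 c1.2} is stated in the paper without proof as an immediate specialization of Proposition \ref{r12 p3.1 }, obtained by taking $A_k=B_k=\varnothing$ (so the suprema run over all of $K$) and using the aperiodicity of $\alpha$ to pick a strictly increasing sequence $\{n_k\}$ with $\alpha^{n_k}(K)\cap K=\varnothing$. Your handling of the reduction to measures supported on a compact set is also consistent with how the proposition's proof sets things up, so nothing further is needed.
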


The cosine operator functions and the dynamics of such mappings have been studied in several papers, such as \cite{but,sha,bmi08,kalmes10}. In the rest of this section we shall consider the cosine operator functions generated by  $ T_{\alpha, W}^{*n}$ for each $n \in \mathbb{N} .$ Thus, for each $n \in \mathbb{N} ,$ we put $C_{\alpha, W}^{(n)^{*}} = \dfrac{1}{2} (T_{\alpha, W}^{*n} +  S_{\alpha, W}^{*n}) .$\\
The main idea for the proof of the next proposition is inspired by the proof of \cite[Theorem 5]{kalmes10}.

\begin{proposition} \label{r12 p3.4}
	We have that $(ii) \Rightarrow (i):$ \\
	$(i)$ The sequence $(C_{\alpha,w}^{(n)^{*}}) $ is topologically hyper-transitive on $M(\Omega) .$ \\
	$(ii)$ For every compact subset $K $ od $\Omega $ and any two measures $\mu,v$ in $M(\Omega)$ with $\vert \mu \vert (K^{c}) = \vert v \vert (K^{c}) =  0  $ there exist a strictly increasing sequence $\lbrace n_{k} \rbrace \subseteq \mathbb{N} $ and sequences $\lbrace A_{k} \rbrace_{k} , \lbrace F_{k} \rbrace_{k}  , \lbrace D_{k} \rbrace_{k}  $ of Borel subsets of $K$ such that 
	$$ \lim_{k \rightarrow \infty } \vert \mu \vert (A_{k}) = \lim_{n \rightarrow \infty } \vert v \vert (A_{k}) = 0  ,$$
	
	$$\lim_{k \rightarrow \infty } \sup_{t\in K \cap A_{k}^{c}} 
	\text{ } ( \prod_{j=0}^{n_{k}-1} (w \circ \alpha^{j}) (t) ) =
	\lim_{k \rightarrow \infty } \sup_{t\in K \cap A_{k}^{c}} 
	\text{ } ( \prod_{j=0}^{n_{k}-1} (w \circ \alpha^{-j})^{-1} (t) ) = 0
	,$$ 
	
	$$
	\lim_{k \rightarrow \infty } \sup_{t\in F_{k}} 
	\text{ } ( \prod_{j=0}^{2 n_{k}-1} (w \circ \alpha^{j}) (t) ) =
	\lim_{k \rightarrow \infty } \sup_{t\in D_{k}} 
	\text{ } ( \prod_{j=1}^{2 n_{k}} (w \circ \alpha^{-j})^{-1} (t) ) = 0,
	$$
	
	where $F_{k} \cap D_{k} = \varnothing $ and $A_{k}^{c} \cap K = F_{k} \cup D_{k}$ for all $k.$
\end{proposition}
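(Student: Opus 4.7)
The plan is to mimic the structure of the proof of Proposition 3.1 (direction $ii) \Rightarrow i)$), but to split the target measure into two pieces so that both branches of the cosine $C_{\alpha,w}^{(n_{k})^{*}} = \tfrac{1}{2}(T_{\alpha,w}^{*n_{k}} + S_{\alpha,w}^{*n_{k}})$ can be handled in parallel. Given non-empty open subsets $O_{1}, O_{2} \subseteq M(\Omega)$ and measures $\mu \in O_{1},$ $v \in O_{2},$ I would first use the regularity of Radon measures to reduce, as in Proposition 3.1, to the case where $|\mu|$ and $|v|$ are supported in a common compact set $K.$ Hypothesis $(ii)$ then supplies the strictly increasing sequence $\lbrace n_{k} \rbrace$ and the Borel sets $A_{k}, F_{k}, D_{k} \subseteq K$ with the listed decay properties.

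The key construction exploits the disjoint decomposition $A_{k}^{c} \cap K = F_{k} \cup D_{k}$ to cut $v$ into two halves: set $v_{k}^{F}(E) := v(E \cap F_{k})$ and $v_{k}^{D}(E) := v(E \cap D_{k}),$ so that $\|v - v_{k}^{F} - v_{k}^{D}\| \leq |v|(A_{k}) \to 0,$ and put $\tilde\mu_{k}(E) := \mu(E \cap A_{k}^{c} \cap K),$ which gives $\tilde\mu_{k} \to \mu$ in norm. The candidate sequence is then
\[
\eta_{k} := \tilde\mu_{k} + 2\, T_{\alpha,w}^{*n_{k}}(v_{k}^{F}) + 2\, S_{\alpha,w}^{*n_{k}}(v_{k}^{D}).
\]
The choice is designed so that when $C_{\alpha,w}^{(n_{k})^{*}}$ is applied, the mutual-inversion identities $T_{\alpha,w}^{*n_{k}} S_{\alpha,w}^{*n_{k}} = I = S_{\alpha,w}^{*n_{k}} T_{\alpha,w}^{*n_{k}}$ turn the $T_{\alpha,w}^{*n_{k}}(v_{k}^{F})$ summand into $v_{k}^{F}$ through the $S$-branch, and the $S_{\alpha,w}^{*n_{k}}(v_{k}^{D})$ summand into $v_{k}^{D}$ through the $T$-branch, so that their sum recovers $v.$

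For the verification, the norm estimates $(3)$ and $(4)$ from the proof of Proposition 3.1, combined with the $n_{k}$-decay on $A_{k}^{c} \cap K \supseteq F_{k} \cup D_{k}$ from the first line of $(ii),$ yield $T_{\alpha,w}^{*n_{k}}(v_{k}^{F}) \to 0$ and $S_{\alpha,w}^{*n_{k}}(v_{k}^{D}) \to 0,$ hence $\eta_{k} \to \mu.$ Expanding and using the inversion identities gives
\[
C_{\alpha,w}^{(n_{k})^{*}}(\eta_{k}) = \tfrac{1}{2}\bigl(T_{\alpha,w}^{*n_{k}}(\tilde\mu_{k}) + S_{\alpha,w}^{*n_{k}}(\tilde\mu_{k})\bigr) + v_{k}^{F} + v_{k}^{D} + T_{\alpha,w}^{*2n_{k}}(v_{k}^{F}) + S_{\alpha,w}^{*2n_{k}}(v_{k}^{D}),
\]
where the bracketed term tends to $0$ by the $n_{k}$-decay on $A_{k}^{c} \cap K,$ the term $T_{\alpha,w}^{*2n_{k}}(v_{k}^{F})$ vanishes by the $2n_{k}$-forward decay on $F_{k},$ the term $S_{\alpha,w}^{*2n_{k}}(v_{k}^{D})$ by the $2n_{k}$-backward decay on $D_{k},$ and $v_{k}^{F} + v_{k}^{D} \to v$ by construction. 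The main obstacle is precisely the bookkeeping of two different scales: the $n_{k}$-scale controls the same-branch errors, while the cross terms produced by the mixing in $C_{\alpha,w}^{(n_{k})^{*}}$ force one to require the $2n_{k}$-scale decay, and the disjoint partition $A_{k}^{c} \cap K = F_{k} \cup D_{k}$ built into $(ii)$ is the device that routes each piece of $v$ to exactly the branch that can absorb the $2n_{k}$-scale error, in the spirit of \cite[Theorem 5]{kalmes10}.
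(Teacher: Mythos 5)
Your proposal is correct and follows essentially the same route as the paper's proof: the same reduction to a common compact support, the same three truncated measures supported on $A_{k}^{c}\cap K$, $F_{k}$ and $D_{k}$, the same candidate $\eta_{k}=\tilde\mu_{k}+2T_{\alpha,w}^{*n_{k}}(\tilde v_{k})+2S_{\alpha,w}^{*n_{k}}(\tilde{\tilde v}_{k})$, and the same use of the estimates $(3)$ and $(4)$ together with the inversion identity $T_{\alpha,w}^{*n_{k}}S_{\alpha,w}^{*n_{k}}=S_{\alpha,w}^{*n_{k}}T_{\alpha,w}^{*n_{k}}=I$ to show $\eta_{k}\to\mu$ and $C_{\alpha,w}^{(n_{k})^{*}}(\eta_{k})\to v$.
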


\begin{proof}
	As in the proof of Proposition \ref{r12 p3.1 }, given two open subsets $O_{1} $ and $O_{2} $ of $M(\Omega) ,$ there exists some compact subset $K$ of $\Omega $ and measures $ \mu \in O_{1}, v \in O_{2}$ such that $ \vert \mu \vert (K^{c}) = \vert v \vert (K^{c}) =  0.$ Choose the strictly increasing sequence $ \lbrace n_{k} \rbrace_{k}$ and the sequences of Borel subsets of $K, \lbrace A_{k} \rbrace_{k} , \lbrace F_{k} \rbrace_{k}  , \lbrace D_{k} \rbrace_{k} $ satisfying the assumptions of the proposition with respect to $\mu,v$ and $K.$ For each $ k \in \mathbb{N},$ let $\tilde{\mu}_{k}, \tilde{v}_{k} $ and $\tilde{ \tilde{v}}_k$ be the measures in $M(\Omega)$ defined by 	
	$  \tilde{v}_{k} (E)= v (E \cap F_{k}), \tilde{ \tilde{v}}_k (E) = v (E \cap D_{k}) $ and $ \tilde{\mu}_{k} (E)= \mu ( E \cap A_{k}^{c} \cap K )$ 
	for every measurable subset $E$ of $ \Omega .$ Then 
	$( \tilde{v}_{k} +  \tilde{ \tilde{v}}_{k}) (E) = v (E \cap A_{k}^{c} \cap K) $ for every measurable subset $E$ of $\Omega $ and it is not hard to see that $\parallel v -  \tilde{v}_{k} -  \tilde{ \tilde{v}}_{k} \parallel \stackrel{k \rightarrow \infty }{\longrightarrow} 0$ since $ \vert v \vert (A_{k}) \rightarrow 0 $ as $k \rightarrow \infty $ by the assumption. Similarly, since $ \vert \mu \vert (A_{k}) \rightarrow 0 $ as $k \rightarrow \infty ,$ we have $ \parallel \mu - \mu_{k} \parallel \rightarrow 0 $ as $ k \rightarrow \infty.$
	
	By $(3)$ we have that 
	$$\parallel T_{\alpha , w}^{*^{n_{k}}} (\tilde{v}_{k}) \parallel \leq 
	\sup_{t\in F_{k}} 
	\text{ } ( \prod_{j=0}^{n_{k}-1} (w \circ \alpha^{j}) (t) ) \parallel v \parallel \leq 
	\sup_{t\in A_{k}^{c} \cap K} 
	\text{ } ( \prod_{j=0}^{n_{k}-1} (w \circ \alpha^{j}) (t) ) \parallel v \parallel 
	\stackrel{k \rightarrow \infty }{\longrightarrow} 0
	.$$ 
	Similarly, we can show that
	
	$$\lim_{k \rightarrow \infty } T_{\alpha , w}^{*^{2n_{k}}} (\tilde{v}_{k})=0 , \lim_{k \rightarrow \infty } S_{\alpha , w}^{*^{n_{k}}} (\tilde{ \tilde{v}}_{k})=0,  \lim_{k \rightarrow \infty } S_{\alpha , w}^{*^{2n_{k}}} (\tilde{ \tilde{v}}_{k})=0 ,$$
	$$ \lim_{k \rightarrow \infty } T_{\alpha , w}^{*^{n_{k}}} (\tilde{\mu}_{k})=0 \text{ and} \lim_{k \rightarrow \infty } S_{\alpha , w}^{*^{n_{k}}} (\tilde{\mu}_{k})=0 .$$ 
	Set for each $ k \in \mathbb{N},$ 
	$\eta_{k}=\tilde{\mu}_{k} +2 T_{\alpha , w}^{*^{n_{k}}} (\tilde{v}_{k}) + 
	2S_{\alpha , w}^{*^{n_{k}}} (\tilde{ \tilde{v}}_{k})  $ and proceed as in the proof of Proposition \ref{r12 p3.1 } part $ii) \Rightarrow i) .$
\end{proof}

\begin{corollary} \label{r12 c3.4}
	We have that $(ii) \Rightarrow (i):$ \\
	$(i)$ The sequence $(C_{\alpha,w}^{(n)^{*}}) $ is topologically hyper-transitive on $M(\Omega) .$ \\
	$(ii)$ For every compact subset $K $ od $\Omega $ we have that 
	
	$$\lim_{n \rightarrow \infty } \sup_{t\in K } \text{ } ( \prod_{j=0}^{n-1} (w \circ \alpha^{j}) (t) ) =
	\lim_{n \rightarrow \infty } \sup_{t\in K } 
	\text{ } ( \prod_{j=0}^{n-1} (w \circ \alpha^{-j})^{-1} (t) ) = 0
	,$$ 
	
	$$
	\lim_{n \rightarrow \infty } \sup_{t\in K } 
	\text{ } ( \prod_{j=0}^{2 n-1} (w \circ \alpha^{j}) (t) ) =
	\lim_{n \rightarrow \infty } \sup_{t\in K } 
	\text{ } ( \prod_{j=1}^{2 n} (w \circ \alpha^{-j})^{-1} (t) ) = 0
	.$$
\end{corollary}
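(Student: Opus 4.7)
The plan is to deduce Corollary \ref{r12 c3.4} as an immediate application of Proposition \ref{r12 p3.4}, by exhibiting a trivial choice of the auxiliary Borel sets that reduces the four supremum hypotheses of the proposition to the uniform ones assumed in the corollary.

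Concretely, given a compact set $K\subseteq\Omega$ and measures $\mu,v\in M(\Omega)$ satisfying $|\mu|(K^{c})=|v|(K^{c})=0$, I would pick any strictly increasing sequence $\{n_{k}\}_{k}\subseteq\mathbb{N}$ and set $A_{k}:=\varnothing$, $F_{k}:=K$, and $D_{k}:=\varnothing$ for every $k\in\mathbb{N}$. The disjointness requirement $F_{k}\cap D_{k}=\varnothing$ and the partition identity $A_{k}^{c}\cap K=F_{k}\cup D_{k}$ then hold tautologically, while the vanishing conditions $|\mu|(A_{k})\to 0$ and $|v|(A_{k})\to 0$ are automatic. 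The two conditions in Proposition \ref{r12 p3.4} involving $K\cap A_{k}^{c}$ reduce to $\sup_{t\in K}\prod_{j=0}^{n_{k}-1}(w\circ\alpha^{j})(t)\to 0$ and $\sup_{t\in K}\prod_{j=0}^{n_{k}-1}(w\circ\alpha^{-j})^{-1}(t)\to 0$, both of which are exactly the first line of hypothesis $(ii)$ of the corollary. The condition over $F_{k}=K$ reduces to $\sup_{t\in K}\prod_{j=0}^{2n_{k}-1}(w\circ\alpha^{j})(t)\to 0$, which is one of the two limits on the second line of $(ii)$, whereas the condition over $D_{k}=\varnothing$ is vacuous. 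Thus Proposition \ref{r12 p3.4} applies and yields topological hyper-transitivity of $(C_{\alpha,w}^{(n)^{*}})$ on $M(\Omega)$.

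Since the argument is essentially a bookkeeping exercise transferring uniform decay on the whole compact set $K$ into decay outside a set of small measure, there is no substantive obstacle. I would only remark that my choice $D_{k}=\varnothing$ does not exploit the fourth limit in $(ii)$ concerning $\prod_{j=1}^{2n}(w\circ\alpha^{-j})^{-1}$; one could equally have taken $F_{k}=\varnothing$ and $D_{k}=K$ and used that limit instead. The symmetric shape of hypothesis $(ii)$ is therefore slightly stronger than what is strictly needed for this single implication.
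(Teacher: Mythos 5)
Your proposal is correct and coincides with the paper's (implicit) argument: the corollary is stated without proof precisely because it follows from Proposition \ref{r12 p3.4} by the trivial specialization $A_{k}=\varnothing$, $F_{k}=K$, $D_{k}=\varnothing$ along any strictly increasing $\{n_{k}\}$, exactly as you describe. Your closing remark that the fourth limit in $(ii)$ is then not used (the supremum over $D_{k}=\varnothing$ being vacuous) is accurate and harmless.
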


In the next proposition we give sufficient conditions for the operator $T_{\alpha , w}^{*} $ and for the cosine operator function generated by this operator to be chaotic. The main idea in the proof of this proposition is inspired by the proof of \cite[Theorem 5.3]{kalmes11}.

\begin{proposition}
	We have that $ii) \Rightarrow i) .$ \\
	$(i)$ The sequences $(T_{\alpha , w}^{*n})_{n} $ and  $(C_{\alpha , w}^{(n)^{*}})_{n} $ are chaotic on $M(\Omega) .$\\
	$(ii)$ For any compact subset $K$ of $\Omega $ and any measure $ \mu \in M(\Omega)$ with $\vert \mu \vert (K^{c}) = 0 ,$ there exist a sequence of Borel subsets $ \lbrace A_{k} \rbrace_{k} $ of $K$ and a strictly increasing sequence $\lbrace n_{k} \rbrace_{k} \subseteq \mathbb{N} $ such that $\lim_{k \rightarrow \infty } \vert \mu \vert (A_{k}^{c}) = 0 $ and 
	$$\lim_{k \rightarrow \infty } \sum_{l=1}^{\infty} \sup_{t\in K \cap A_{k}^{c}} 
	\text{ } ( \prod_{j=0}^{ln_{k}-1} (w \circ \alpha^{j}) (t) ) = 
	\lim_{k \rightarrow \infty } \sum_{l=1}^{\infty} \sup_{t\in K \cap A_{k}^{c}} 
	\text{ } ( \prod_{j=1}^{ln_{k}} (w \circ \alpha^{-j})^{-1} (t) ) =0   ,$$ 
	where the corresponding series are convergent for each $k.$ 
\end{proposition}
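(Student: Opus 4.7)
The plan is to verify both chaos statements at once by establishing the two defining properties separately: topological transitivity of each sequence (by reducing to the criteria from the previous propositions), and density of the periodic points (via an explicit bi-infinite series construction). The hypothesis (ii) is strong enough to make both parts follow from arguments already carried out in Propositions \ref{r12 p3.1 } and \ref{r12 p3.4}.

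\emph{Topological transitivity.} Given nonempty open subsets $O_1, O_2$ of $M(\Omega)$, I would pick $\mu \in O_1$ and $v \in O_2$, and by the regularity reduction from the proof of Proposition \ref{r12 p3.1 } assume without loss of generality that $|\mu|(K^c) = |v|(K^c) = 0$ for a common compact set $K \subseteq \Omega$. Applying hypothesis (ii) to the positive Radon measure $|\mu| + |v|$ (which also vanishes off $K$) then produces a single strictly increasing sequence $\{n_k\}$ and Borel sets $A_k \subseteq K$ with $(|\mu| + |v|)(A_k) \to 0$, that simultaneously control both measures. Taking the $l = 1$ terms of the two series gives exactly the sup-conditions required in Proposition \ref{r12 p3.1 }(ii) with $B_k := A_k$, so $T_{\alpha,w}^{*}$ is topologically hyper-transitive. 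For the cosine sequence, the $l = 1$ and $l = 2$ terms supply the $n_k$- and $2n_k$-product conditions demanded by Proposition \ref{r12 p3.4}(ii); the decomposition $A_k^{c} \cap K = F_k \cup D_k$ can be taken degenerately as $F_k := A_k^{c} \cap K$ and $D_k := \varnothing$, since the uniform control holds on all of $A_k^{c} \cap K$.

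\emph{Dense periodic elements.} Given $\mu$ and a target neighbourhood, reduce as above to the case $|\mu|(K^c) = 0$ and extract $\{n_k\}$, $\{A_k\}$ from (ii). Set $\tilde{\mu}_k(E) := \mu(E \cap A_k^{c} \cap K)$, so that $\tilde{\mu}_k \to \mu$ in total variation norm. Define
\[
\eta_k := \tilde{\mu}_k \ + \ \sum_{l=1}^{\infty} T_{\alpha,w}^{* l n_k}(\tilde{\mu}_k) \ + \ \sum_{l=1}^{\infty} S_{\alpha,w}^{* l n_k}(\tilde{\mu}_k).
\]
The norm bounds (3) and (4) from the proof of Proposition \ref{r12 p3.1 }, applied at exponents $l n_k$ with $\tilde{\mu}_k$ supported in $A_k^{c} \cap K$, give
\[
\sum_{l=1}^{\infty} \| T_{\alpha,w}^{* l n_k} \tilde{\mu}_k \| + \sum_{l=1}^{\infty} \| S_{\alpha,w}^{* l n_k} \tilde{\mu}_k \| \leq \|\mu\| \sum_{l=1}^{\infty} \Bigl( \sup_{K \cap A_k^{c}} \prod_{j=0}^{l n_k - 1} (w \circ \alpha^j) + \sup_{K \cap A_k^{c}} \prod_{j=1}^{l n_k} (w \circ \alpha^{-j})^{-1} \Bigr),
\]
which by hypothesis tends to $0$ as $k \to \infty$. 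Hence each $\eta_k$ is a well-defined element of $M(\Omega)$, $\|\eta_k - \tilde{\mu}_k\| \to 0$, and consequently $\eta_k \to \mu$. A shift-by-$n_k$ reindexing of the two tails shows $T_{\alpha,w}^{* n_k}(\eta_k) = \eta_k$, whence $S_{\alpha,w}^{* n_k}(\eta_k) = \eta_k$ as well (since $S^{*} = (T^{*})^{-1}$), and then $T_{\alpha,w}^{* j n_k}(\eta_k) = S_{\alpha,w}^{* j n_k}(\eta_k) = \eta_k$, whence $C_{\alpha,w}^{(j n_k)^{*}}(\eta_k) = \eta_k$ for every $j \in \mathbb{N}$. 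So $\eta_k$ is a periodic element of both sequences with period $n_k$, giving the required density.

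\emph{Main obstacle.} The substantive step is justifying absolute convergence of the bi-infinite series defining $\eta_k$ in $(M(\Omega), \|\cdot\|)$, which is precisely the purpose of the summability condition in (ii); the remaining verifications (periodicity by reindexing, convergence $\eta_k \to \mu$, and the regularity reduction) are routine adaptations of what was done in Proposition \ref{r12 p3.1 }.
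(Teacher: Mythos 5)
Your proposal is correct and follows essentially the same route as the paper: the periodic elements are produced by exactly the same series $\tilde{\mu}_k + \sum_{l}T_{\alpha,w}^{*ln_k}(\tilde{\mu}_k) + \sum_{l}S_{\alpha,w}^{*ln_k}(\tilde{\mu}_k)$, whose absolute convergence is guaranteed by the summability hypothesis, and transitivity is obtained by reducing to Propositions \ref{r12 p3.1 } and \ref{r12 p3.4} (the paper simply cites Proposition \ref{r12 p3.4} for this, whereas you spell out the reduction via $|\mu|+|v|$ and the $l=1,2$ terms, which is a welcome extra detail). Note that, like the paper's own proof, you read the hypothesis as $|\mu|(A_k)\rightarrow 0$ rather than the literal $|\mu|(A_k^c)\rightarrow 0$ in the statement, which appears to be a typo there.
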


\begin{proof}
	By Proposition \ref{r12 p3.4} it suffices to show that $\mathcal{P}((T_{\alpha , w}^{*n})_{n})$ and $\mathcal{P}((C_{\alpha , w}^{(n)^{*}})_{n}) $ are dense in $M(\Omega) .$ Let $ O $ be a non-empty open subset in $M(\Omega) .$ Then, as observed earlier, there exists some measure $\mu \in O $ and a compact subset $K$ of $\Omega $ such that $\vert \mu \vert (K^{c})=0 .$ Choose sequence $ \lbrace A_{k} \rbrace $ of Borel subsets and a strictly increasing sequence $ \lbrace n_{k} \rbrace_{k} \subseteq \mathbb{N}$ satisfying the assumptions in $(ii)$ with respect to $ \mu $ and $K.$ For each $ k \in \mathbb{N} ,$ let $\mu_{k}$ be the measure given by $\mu_{k} (E) = \mu (E \cap A_{k}^{c} \cap K) $ for every measurable subset $E$ of $\Omega.$ Then $ \parallel \mu - \mu_{k} \parallel \rightarrow 0 $ as $ k \rightarrow \infty$ since $\lim_{k \rightarrow \infty } \vert \mu \vert (A_{k}) = 0 .$ Moreover, by $(3),$ $(4)$ and the assumptions in $(ii)$ we have that the series $  \sum_{l=1}^{\infty} T_{\alpha , w}^{*^{ln_{k}}} (\mu_{k}), \sum_{l=1}^{\infty} S_{\alpha , w}^{*^{ln_{k}}} (\mu_{k})  $ are absolutely convergent for each $k$ and moreover 
	$$\lim_{k \rightarrow \infty } \sum_{l=1}^{\infty} \parallel T_{\alpha , w}^{*^{ln_{k}}} (\mu_{k}) \parallel =  
	\lim_{k \rightarrow \infty } \sum_{l=1}^{\infty} \parallel S_{\alpha , w}^{*^{ln_{k}}} (\mu_{k}) \parallel = 0.$$ 
	For each $k \in \mathbb{N} ,$ set 
	$$v_{k}= \mu_{k} + \sum_{l=1}^{\infty} T_{\alpha , w}^{*^{ln_{k}}} (\mu_{k}) + \sum_{l=1}^{\infty} S_{\alpha , w}^{*^{ln_{k}}} (\mu_{k})
	.$$ 
	It is straightforward to check that $v_{k}=T_{\alpha,w}^{*ln_{k}} v_{k}=C_{\alpha,w}^{(ln_{k})^{*}} v_{k}$ for all $l, k \in \mathbb{N}.$ Moreover, for each $k \in \mathbb{N},$ by the relations (3) and (4) we have  
	$$\parallel v_{k} - \mu \parallel \leq \parallel \mu_{k} - \mu \parallel + \sum_{l=1}^{\infty} \parallel T_{\alpha , w}^{*^{ln_{k}}} (\mu_{k}) \parallel + \sum_{l=1}^{\infty} \parallel S_{\alpha , w}^{*^{ln_{k}}} (\mu_{k}) \parallel $$
	$$\leq \parallel \mu_{k} - \mu \parallel + \sum_{l=1}^{\infty} \sup_{t \in A_{k}^{c} \cap K} ( \prod_{j=0}^{ln_{k}-1} (w \circ \alpha^{j}) (t) ) \parallel \mu \parallel $$ $$ +\sum_{l=1}^{\infty} \sup_{t \in A_{k}^{c} \cap K} ( \prod_{j=0}^{ln_{k}} (w \circ \alpha^{-j})^{-1} (t) ) \parallel \mu \parallel .$$
	By the assumptions in \textit{(ii)} it follows that $ \lim_{k \rightarrow \infty } v_{k}=\mu,$ which proves the proposition.	
\end{proof}

\begin{corollary} \label{r12 c3.6}
	We have that $ii) \Rightarrow i) .$ \\
	$(i)$ The sequences $(T_{\alpha , w}^{*n})_{n} $ and  $(C_{\alpha , w}^{(n)^{*}})_{n} $ are chaotic on $M(\Omega) .$\\
	$(ii)$ For any compact subset $K$ of $\Omega $ we have that 
	$$\lim_{n \rightarrow \infty } \sum_{l=1}^{\infty} \sup_{t\in K } 
	\text{ } ( \prod_{j=0}^{ln-1} (w \circ \alpha^{j}) (t) ) = 
	\lim_{n \rightarrow \infty } \sum_{l=1}^{\infty} \sup_{t\in K } 
	\text{ } ( \prod_{j=1}^{ln} (w \circ \alpha^{-j})^{-1} (t) ) =0   ,$$ 
	where the corresponding series are convergent for each $n.$ 
\end{corollary}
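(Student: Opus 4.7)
The plan is to deduce this corollary directly from the previous proposition by making the simplest possible choice of the auxiliary data furnished there. Given any compact set $K \subseteq \Omega$ and any $\mu \in M(\Omega)$ with $|\mu|(K^{c}) = 0$, I would take $A_{k} := \varnothing$ for every $k$ and choose $n_{k} := k$. With this choice, $A_{k}^{c} \cap K = K$, so $\sup_{t \in K \cap A_{k}^{c}}$ is literally the same as $\sup_{t \in K}$, and the requirement $|\mu|(A_{k}) \to 0$ is trivially verified independently of $\mu$.

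Under these choices the two sum-of-supremum conditions appearing in clause (ii) of the previous proposition collapse onto the two stated in clause (ii) of the present corollary. Since by hypothesis
$$\sum_{l=1}^{\infty} \sup_{t\in K } \prod_{j=0}^{ln-1} (w \circ \alpha^{j})(t) \xrightarrow{n \to \infty} 0 \quad\text{and}\quad \sum_{l=1}^{\infty} \sup_{t\in K} \prod_{j=1}^{ln} (w \circ \alpha^{-j})^{-1}(t) \xrightarrow{n \to \infty} 0,$$
the same limits hold along the particular subsequence $n_{k} = k$. Therefore the hypothesis of the preceding proposition is satisfied for every compact $K$ and every admissible $\mu$, and that proposition delivers the desired conclusion that both $(T_{\alpha , w}^{*n})_{n}$ and $(C_{\alpha , w}^{(n)^{*}})_{n}$ are chaotic on $M(\Omega)$.

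There is essentially no technical obstacle: the whole content of the corollary is that the $\mu$-dependent exceptional set $A_{k}$ afforded by the proposition can be dispensed with when the decay of the weighted products is uniform over all of $K$. The one point that warrants an explicit remark in the write-up is the trivial verification that $A_{k} \subseteq K$ is allowed to be empty (which is consistent with the statement of the preceding proposition) and that the convergence of the series for each $n$, assumed in (ii) of the corollary, is precisely what lets us define the periodic elements $v_{k} = \mu_{k} + \sum_{l \geq 1} T_{\alpha,w}^{*\,ln_{k}}(\mu_{k}) + \sum_{l \geq 1} S_{\alpha,w}^{*\,ln_{k}}(\mu_{k})$ exactly as in the proposition's proof.
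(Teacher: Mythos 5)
Your proposal is correct and is essentially the paper's (implicit) derivation: the corollary is obtained from the preceding proposition by taking $A_{k}=\varnothing$ and $n_{k}=k$, so that $K\cap A_{k}^{c}=K$ and the condition $|\mu|(A_{k})\to 0$ holds trivially. (The proposition's statement writes $|\mu|(A_k^c)\to 0$, but its proof uses $|\mu|(A_k)\to 0$, which is the intended reading and the one your specialization satisfies.)
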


As a concrete example, let $\Omega = \mathbb{R}, \ \alpha (t) = t+1 $ for all $ t \in \mathbb{R} $ and 
$$w(t)=
\begin{cases}
	2 \ \text{ for } t \leq -1, & \\
	\dfrac{1}{2} \ \text{ for } t \geq 1 ,&  \\
	\text{linear on the segment } [-1,1] .& 
\end{cases}$$

In this case the conditions of Corollary \ref{r12 c1.2}, Corollary \ref{r12 c3.4} and  Corollary \ref{r12 c3.6} are satisfied. 

In general, if $M, \epsilon > 0 $ such that $1+ \epsilon < M ,$ $1 - \epsilon > \dfrac{1}{M} ,$ and $K_{1}, K_{2} > 0 ,$ then if $w \in C_{b} (\mathbb{R}) $ satisfies that $M \geq \vert w(t) \vert \geq 1 + \epsilon $ for all $t \leq -K_{1} $ and $\dfrac{1}{M} \leq \vert w(t) \vert \leq 1 - \epsilon $ for all $t \geq K_{2} ,$ then the conditions of Corollary \ref{r12 c1.2}, Corollary \ref{r12 c3.4} and  Corollary \ref{r12 c3.6} are satisfied.\\
In fact, it is not hard to see from the proof of Proposition \ref{r12 p3.1 } part $ii) \Rightarrow i) $ and from the proof of Proposition \ref{r12 p3.4} that the sufficient conditions of Corollary \ref{r12 c1.2} and Corollary \ref{r12 c3.4} ensure also that the respective operators will be even topologically mixing and not just topologically hyper-transitive. 
\section{Convergence of Markov chains in the space of signed Radon measures}

In this section, we shall first recall the notions and definitions from \cite{francuzi} and \cite{master} that are needed to prove the main result denoted as Proposition \ref{glavna} .

\text{ } 

Let $X$ be a real vector space and $ K\subseteq X, \ K\neq \emptyset.$ We say that $K$ is a cone in $X$ if $K$ satisfies the following properties:
\begin{enumerate}
	\item $K+K\subseteq K .$
	\item $\lambda K \subseteq K $ for all $\lambda >0 .$
	\item $K \cap (-K)= \lbrace 0 \rbrace .$
\end{enumerate}

We let $\leq $ denote the associated order on X, so $x\leq y $ if and only if $ (y-x) \in K$. For $e \in K$, set $$I_e= \lbrace x\in X \ | \ -e \leq x \leq e \rbrace .$$ Recall that $ A \subset X $ is called absorbing if $$\lbrace t>0 \ | \ x \in tA \rbrace \neq \emptyset $$ for every $x \in X$. An element $e \in K $ is called an order unit for $(X,K) $ if $I_e$ is absorbing. 
\begin{flushleft}
	If $\mathrm{ Int } K \neq \emptyset $ for  $x \in X$ and $y \in \mathrm{Int} (K)$ we define then   
\end{flushleft}
\begin{center}
	$M(x/y):=\inf \lbrace t \in \mathbb{R} \ : \ x\leq ty \rbrace $
\end{center}
\begin{center}
	and $m(x/y):=\sup \lbrace t \in \mathbb{R} \ : \ x\geq ty \rbrace .$
\end{center}
\begin{flushleft}
	For an order unit $e \in \mathrm{K}$, we define the Thompson's norm with respect to $e$ to be given by $||x||_T:=\max \lbrace M(x/e),-m(x/e)\rbrace $ for all $x \in X .$ 
\end{flushleft}

Let $e \in K \backslash \lbrace 0 \rbrace.$ Then $e$ is an order unit if and only if $e$ is an internal point of $K ,$ that is for all  $ x \in X $ there exists a $ \delta >0 $ such that $(e+ \lambda x ) \in K $ for all $ \lambda \in [-\delta, \delta ] .$  

For $e \in K\setminus \lbrace 0\rbrace $, set $$X_e= \bigcup_{t \geq 0 } tI_e \ , \ K_e=K\cap X_e.$$ 
Then $X_e $ is a subspace of $X$, $I_e$ is an absorbing, absolutely convex subset of $X_e$, $K_e$ is a cone of $X_e$ and $e$ is an order unit for $(X_e,K_e)$. If $|\cdot|_e $ denotes the Minkowski functional on $X_e$ associated with $I_e$, (so $|x|_e=\inf \lbrace t>0 \ | \ x \in tI_e \rbrace$), then $|\cdot|_e$ is a seminorm on $X_e$ such that $|e|_e=|-e|_e=1.$ All these facts can be easily checked by elementary calculations. For the details, we refer to \cite[Section 2]{master}.\\

Next we recall the following definition. 

\begin{definition} \cite{francuzi} \cite[Definition 1.1]{master} Let $(X,||\cdot ||)$  be a real normed space, let $K$ be a cone in $X$. We say that $K$ is normal if there is some constant $M>0 $ such that $||x||\leq M||y|| $ for all $x,y \in K $ satisfying $x \leq y .$
\end{definition}
We recall also the following useful proposition.
\begin{proposition} \label{master} \cite[Proposition 1.2]{master}
	
	Assume that $(X,||\cdot ||)$ is a real normed space, K is closed and normal cone in $X$ and let $e \in K\setminus \lbrace 0 \rbrace$. Then the following statements hold.
	\begin{enumerate}
		\item $|\cdot |_e$ is a norm on $X_e$ and $||\cdot|| \leq \alpha |\cdot |_e$ on $X_e $ for some $\alpha >0 .$
		\item If $(X,||\cdot||)$ is a Banach space, then $(X_e,|\cdot |_e) $ is a Banach space.
		\item If $e$ is an order-unit in $(X,K)$, then $X=X_e$.
		\item If $e \in \mathrm{Int}(K)$, where $\mathrm{Int}(K)$ is the interior of $K$ in topological sense, then $e$ is order-unit in $(X,K)$ and $||\cdot ||$ , $|\cdot|_e$ are equivalent norms on $X$. 
	\end{enumerate} 
	Moreover, $|\cdot |_e$ is the same as the Thomson's norm on $X$ defined above.  
\end{proposition}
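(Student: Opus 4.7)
The plan is to prove the four numbered claims in order, with the Thompson's-norm identification at the end, using normality of $K$ as the key technical input for (1) and closedness of $K$ as the key input for (2). Throughout, one needs to keep careful track of the fact that we are working on the linear subspace $X_e = \bigcup_{t \geq 0} tI_e$: I would first check quickly that $X_e$ is indeed a linear subspace on which $I_e$ is absorbing and absolutely convex (balancedness uses that $I_e = -I_e$, convexity is immediate from the order axioms, and absorbing-ness in $X_e$ is built into the definition of $X_e$), which makes $|\cdot|_e$ an honest seminorm by the standard Minkowski-functional argument.

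For part (1), the seminorm property is then automatic; the only remaining point is non-degeneracy together with the inequality $\|\cdot\| \leq \alpha |\cdot|_e$. Both follow from a single application of normality: if $|x|_e < t$, then $-te \leq x \leq te$, so $0 \leq x + te \leq 2te$, whence $\|x + te\| \leq 2Mt\|e\|$ by normality, and the triangle inequality gives $\|x\| \leq (2M+1)t\|e\|$. Taking $t \downarrow |x|_e$ yields $\|x\| \leq (2M+1)\|e\|\,|x|_e$, so $\alpha := (2M+1)\|e\|$ works; in particular $|x|_e = 0$ forces $\|x\| = 0$, so $|\cdot|_e$ is a norm.

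For part (2), given a $|\cdot|_e$-Cauchy sequence $(x_n) \subset X_e$, the comparison from (1) makes it $\|\cdot\|$-Cauchy, hence convergent in $X$ to some $x$. Fixing $n \geq N$ large and letting $m \to \infty$ in the relations $\pm(x_n - x_m) + \epsilon e \in K$, closedness of $K$ gives $\pm(x_n - x) + \epsilon e \in K$, that is $|x_n - x|_e \leq \epsilon$; this both places $x$ in $X_e$ (since $x - x_n \in \epsilon I_e \subset X_e$) and shows $x_n \to x$ in $|\cdot|_e$. Part (3) is the translation of the definition of order unit: $I_e$ absorbing in $X$ means every $x$ lies in some $tI_e$. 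Part (4) follows by picking $\delta > 0$ with the open ball $B(e,\delta) \subseteq K$; applying this to the pair $e \pm (\delta/\|x\|)\,x$ gives $|x|_e \leq \|x\|/\delta$, and combining with (1) yields equivalence of norms. The moreover clause is an unwinding of definitions: $x \in tI_e$ iff $t \geq M(x/e)$ and $t \geq -m(x/e)$, so $|x|_e = \max\{M(x/e),\,-m(x/e)\} = \|x\|_T$.

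The main obstacle I anticipate is not conceptual but bookkeeping: in part (1) the absorbing and absolutely-convex checks for $I_e$ must be performed inside $X_e$, not inside $X$ (since without assuming $e$ is an order unit or lies in the interior of $K$, $I_e$ need not absorb everything in $X$), and one must verify that $X_e$ itself is a linear subspace so that the Minkowski-functional machinery applies at all. Once these preliminaries are in place, the rest is a clean sequence of short arguments hinging on the single normality estimate above and the closedness of $K$.
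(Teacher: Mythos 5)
Your proposal is correct. Note that the paper itself gives no proof of this proposition: it is quoted verbatim from \cite[Proposition 1.2]{master} (the author's master thesis), with the surrounding facts about $X_e$ and $I_e$ dismissed as "easily checked by elementary calculations." Your argument is exactly the standard one that reference carries out: the single normality estimate $0\leq x+te\leq 2te$ giving $\|x\|\leq (2M+1)\|e\|\,|x|_e$ for part (1), closedness of $K$ to pass to the limit in $\pm(x_n-x_m)+\epsilon e\in K$ for completeness in part (2), and the unwinding $|x|_e=\max\{M(x/e),-m(x/e)\}$ for the Thompson's-norm identification. The only cosmetic point: in part (4) the vectors $e\pm(\delta/\|x\|)x$ lie on the boundary of the open ball $B(e,\delta)$, so either replace $\delta$ by $\delta/2$ or invoke closedness of $K$ to include the closed ball; this does not affect the conclusion.
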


From now on in this section, unless else is specified, we let $(X,||\cdot ||) $ be a real Banach space, $K \subseteq X $ be a normal, closed cone with Int$(K)\neq \emptyset , \ e\in \mathrm{ Int }K$ be an order unit and $||\cdot ||_T$ be the Thompson' norm with respect to $e .$
Since $||\cdot||$ and $||\cdot ||_T$ are equivalent because K is closed and normal, we have  $ (X,||\cdot ||)^*=(X,||\cdot ||_T)^*$. This space will be denoted by $X^*$ from now on.\\
Furthermore, we define the dual cone $K^*$ in $X^*$ by $$K^*= \lbrace x \in X^* \ | \ z(x)  \geq 0 \ \forall x \in K \rbrace $$ and the abstract simplex by $$ P(e)=\lbrace \mu \in K^* \ | \ \mu (e) \ =1 \rbrace .$$

Given $(X, e, ||\cdot ||_T)$, consider now the quotient space $X /{ \mathbb{R} e} $ and define Hilbert's quotient norm $||| \cdot |||_H : X/{\mathbb{R} e} \rightarrow \mathbb{R}^+$ by $$|||x+\mathbb{R} e|||_H=2\inf_{\lambda \in \mathbb{R}} ||x+\lambda e ||_T .$$ Then $|||\cdot |||_H$  is a norm on $X / {\mathbb{R} e} .$

Moreover, let $M(e)=\lbrace \mu \in X^* | \ \mu (e) \ =0 \rbrace $ and define Hilbert's dual norm  
$$||\cdot ||^*_H :M(e) \rightarrow \mathbb{R}^+ \text{ } \mathrm{by} \text{ } ||\mu ||_H^*=\frac{1}{2}||\mu||_T^* \ \ \textrm{for all } \mu \in M(e). $$ 
We have then the following lemma.
\begin{lemma} \cite{francuzi} \cite[Lemma 4.2]{master} $(X /{\mathbb{R} e}^*, |||\cdot |||_H^* ) $ is isometrically isomorphic to  \\
	$(M(e),||\cdot ||_H^* ).$
\end{lemma}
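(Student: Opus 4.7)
The plan is to exhibit the canonical duality between $(X/\mathbb{R}e)^{*}$ and the annihilator of $\mathbb{R}e$ in $X^{*}$, and then check that the factor of $2$ built into $|||\cdot|||_{H}$ matches the factor of $\tfrac12$ built into $\|\cdot\|_{H}^{*}$. Since the quotient map is taken with respect to the finite-dimensional (hence closed) subspace $\mathbb{R}e$, the standard quotient-duality theorem will supply the isomorphism; the only real content is bookkeeping of the normalization.

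More concretely, define $\Phi:(X/\mathbb{R}e)^{*}\to M(e)$ by $\Phi(\varphi)(x):=\varphi(x+\mathbb{R}e)$ for $\varphi\in(X/\mathbb{R}e)^{*}$ and $x\in X$. First I would verify that $\Phi$ is well-defined: the composition with the (continuous) quotient map gives a linear functional on $X$ that is continuous with respect to $\|\cdot\|_{T}$ (equivalently $\|\cdot\|$, by Proposition \ref{master}(4)), and $\Phi(\varphi)(e)=\varphi(0)=0$, so $\Phi(\varphi)\in M(e)$. Linearity of $\Phi$ is immediate. For injectivity: if $\Phi(\varphi)=0$ then $\varphi$ vanishes on every coset $x+\mathbb{R}e$, so $\varphi=0$. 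For surjectivity: given $\mu\in M(e)$, put $\varphi(x+\mathbb{R}e):=\mu(x)$; this is well-defined because $\mu(e)=0$, and $\varphi$ is bounded with respect to $|||\cdot|||_{H}$ by the norm computation below, hence lies in $(X/\mathbb{R}e)^{*}$ and satisfies $\Phi(\varphi)=\mu$.

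The key step is the isometry, $|||\varphi|||_{H}^{*}=\|\Phi(\varphi)\|_{H}^{*}$. Write $\mu=\Phi(\varphi)$ and use that $\mu(x)=\mu(x+\lambda e)$ for every $\lambda\in\mathbb{R}$. If $\inf_{\lambda}\|x+\lambda e\|_{T}\leq 1/2$, pick $\lambda$ with $\|x+\lambda e\|_{T}\leq 1/2+\varepsilon$; then
\[
|\varphi(x+\mathbb{R}e)|=|\mu(x+\lambda e)|\leq \|\mu\|_{T}^{*}\bigl(\tfrac12+\varepsilon\bigr),
\]
so letting $\varepsilon\downarrow 0$ and taking the supremum over cosets with $|||x+\mathbb{R}e|||_{H}\leq 1$ yields $|||\varphi|||_{H}^{*}\leq \tfrac12\|\mu\|_{T}^{*}=\|\mu\|_{H}^{*}$. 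Conversely, for any $y\in X$ with $\|y\|_{T}\leq 1$ the coset $\tfrac{y}{2}+\mathbb{R}e$ has $|||\tfrac{y}{2}+\mathbb{R}e|||_{H}\leq 1$ and $\varphi(\tfrac{y}{2}+\mathbb{R}e)=\tfrac12\mu(y)$; taking the supremum over such $y$ gives the reverse inequality $|||\varphi|||_{H}^{*}\geq \tfrac12\|\mu\|_{T}^{*}=\|\mu\|_{H}^{*}$.

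No serious obstacle is anticipated; the only point requiring care is the interplay of the two $\inf/\sup$ operations with the shift invariance $\mu(\cdot+\lambda e)=\mu(\cdot)$, which is precisely the standard trick that underlies the identification $(X/Y)^{*}\cong Y^{\perp}$ for closed $Y$. The factor $2$ in the quotient norm is absorbed by the factor $\tfrac12$ in the dual definition, so the resulting map is not merely a Banach-space isomorphism but an isometric one.
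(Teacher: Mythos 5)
Your proof is correct; the paper does not prove this lemma itself (it is quoted from \cite{francuzi} and \cite[Lemma 4.2]{master}), and your argument is exactly the standard identification $(X/Y)^{*}\cong Y^{\perp}$ for the closed subspace $Y=\mathbb{R}e$, with the factor $2$ in $|||\cdot|||_{H}$ correctly cancelled against the factor $\tfrac12$ in $\|\cdot\|_{H}^{*}$. Both directions of the isometry (the $\varepsilon$-approximation of the infimum and the test cosets $\tfrac{y}{2}+\mathbb{R}e$) are handled properly, so nothing is missing.
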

We define now the induced linear map  $\tilde{T}:X/\mathbb{R}e \rightarrow X/\mathbb{R}e $ by $$\tilde{T}(x+\mathbb{R}e )=T(x)+\mathbb{R}e .$$
It is straightforward to check that $\tilde{T}$ is a well defined, bounded linear map with respect to $|||\cdot|||_H.$
Hence we may consider $|||\tilde{T}|||_H$, that is the operator norm of $\tilde{T}$ with respect to $|||\cdot|||_H.$\\
We also define $S^*:M(e)\rightarrow M(e)$ by letting $S^*=T_{|_{M(e)}} .$ One can show that $||S^*||_H^*=|||\tilde{T}|||_H ,$ for more details, see \cite[Section 6]{master}.

We recall that a bounded linear operator $T$ on $X$ is said to be a Markov operator with respect to $K$ and $e$ if $T(K) \subseteq K$ and $T(e) = e .$

\text{ }

The crucial tool for proving the main result in this section is the following theorem.
\begin{theorem} \label{theomaster} \cite{francuzi} \cite[Theorem 6.1]{master}
	Let $ T: X \rightarrow X $ be a Markov operator with respect to $K$ and $e$. If $\vert \vert \vert \tilde{T} \vert \vert \vert_H <1 ,$ or equivalently if $\parallel S^* \parallel_H^* <1 ,$ then there is $ \pi \in P(e) $ such that for all $x \in X$ and $n \in \mathbb{N},$ we have 
	$$ \parallel T^n (x) - \pi (x) e\parallel_T \leq (\vert \vert \vert \tilde{T} \vert \vert \vert_H)^n \parallel x \parallel_H $$ 
	$$ \mathrm{and} \ \parallel (T^*)^n ( \mu  ) -\pi \parallel_H^* \leq (\vert \vert \vert \tilde{T} \vert \vert \vert_H)^n$$ $ \ for \ all \ \mu \in P(e) $  \textit{and all } $n\in \mathbb{N}.$
\end{theorem}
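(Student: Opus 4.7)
The plan is to use the hypothesis $|||\tilde T|||_H < 1$ to apply Banach's fixed-point theorem on the dual side: first construct the invariant probability $\pi \in P(e)$ as a limit of $(T^*)^n\mu_0$ for any $\mu_0 \in P(e)$, then read off the dual estimate directly from the contraction of $S^*$ on $M(e)$, and finally deduce the primal estimate via a duality argument that splits each test functional through $\pi$.

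For the construction of $\pi$, I would fix $\mu_0 \in P(e)$ and set $\mu_n := (T^*)^n \mu_0$. Because $T$ is Markov we have $T^*(K^*) \subseteq K^*$ and $\mu_n(e) = \mu_0(T^n e) = \mu_0(e) = 1$, so $\mu_n \in P(e)$ for every $n$. Any two elements of $P(e)$ differ by an element of $M(e)$, so for $m \geq n$ one has $\mu_m - \mu_n = (S^*)^n(\mu_{m-n}-\mu_0)$, and the sequence $(\mu_n)$ is Cauchy in $\|\cdot\|_H^*$ with geometric rate $|||\tilde T|||_H^n$. Its limit $\pi$ lies in the norm-closed set $P(e)$, is independent of $\mu_0$ by the same contraction, and satisfies $T^*\pi = \pi$ by continuity. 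The dual estimate now follows at once: for $\mu \in P(e)$,
\[
\|(T^*)^n\mu - \pi\|_H^* = \|(S^*)^n(\mu-\pi)\|_H^* \leq |||\tilde T|||_H^n \|\mu-\pi\|_H^*,
\]
and $\|\mu-\pi\|_H^* \leq 1$ because both $\mu$ and $\pi$ lie in $K^*$ with value $1$ on $e$, which forces $\|\mu\|_T^*, \|\pi\|_T^* \leq 1$ (elements of $K^*$ are monotone, and $-e \leq x \leq e$ implies $|\mu(x)| \leq \mu(e) = 1$).

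For the primal estimate, I would fix $x \in X$, set $y := T^n x - \pi(x) e$, and test against an arbitrary $\mu \in X^*$ with $\|\mu\|_T^* \leq 1$. The key splitting is $\mu = \nu + \mu(e)\pi$, where $\nu := \mu - \mu(e)\pi$ satisfies $\nu(e) = 0$, hence $\nu \in M(e)$. Using $T^*\pi = \pi$ one expands
\[
\mu(y) = (T^*)^n\mu(x) - \pi(x)\mu(e) = (S^*)^n\nu(x) + \mu(e)\pi(x) - \pi(x)\mu(e) = (S^*)^n\nu(x).
\]
Since $(S^*)^n\nu \in M(e)$ vanishes on $e$, its value at $x$ equals its value at $x+\lambda e$ for every $\lambda \in \mathbb{R}$, so optimizing $\lambda$ gives $|(S^*)^n\nu(x)| \leq \tfrac{1}{2}\|(S^*)^n\nu\|_T^* \|x\|_H$. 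Combined with $\|(S^*)^n\nu\|_T^* = 2\|(S^*)^n\nu\|_H^* \leq |||\tilde T|||_H^n \|\nu\|_T^*$ and $\|\nu\|_T^* \leq \|\mu\|_T^* + |\mu(e)|\|\pi\|_T^* \leq 2$, this produces $|\mu(y)| \leq |||\tilde T|||_H^n \|x\|_H$; taking the supremum over $\mu$ gives $\|y\|_T \leq |||\tilde T|||_H^n \|x\|_H$.

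The main obstacle is the last step: one must see the decomposition $\mu = \nu + \mu(e)\pi$ that extracts the zero-on-$e$ component of a generic test functional, since this is precisely what converts the action of $T^{*n}$ on $\mu$ into the contractive action of $(S^*)^n$ on $\nu$. Once the splitting is in place, the remainder is routine bookkeeping with the normalizations $\|e\|_T = \|\pi\|_T^* = 1$ and $\|\cdot\|_H^* = \tfrac{1}{2}\|\cdot\|_T^*$, together with the duality formula $\|y\|_T = \sup_{\|\mu\|_T^* \leq 1}|\mu(y)|$.
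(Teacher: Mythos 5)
Your argument is correct, but note that the paper does not actually prove this statement: it is quoted verbatim from Gaubert--Qu and from \cite[Theorem 6.1]{master}, so there is no internal proof to compare against. What you have written is a complete, self-contained proof along the standard contraction lines that underlie the cited result (Gaubert--Qu phrase the same mechanism in terms of Dobrushin's ergodicity coefficient). All the key steps check out: $T^*(K^*)\subseteq K^*$ and $T^*(M(e))\subseteq M(e)$ follow from $T(K)\subseteq K$ and $Te=e$; the bound $\|\mu\|_T^*\leq 1$ for $\mu\in P(e)$ is right, though you should say explicitly that $\|x\|_T\leq 1$ implies $-e\leq x\leq e$ only because $K$ is \emph{closed} (one passes from $x\leq (1+\varepsilon)e$ for all $\varepsilon>0$ to $x\leq e$); the Cauchy argument needs the completeness of $(M(e),\|\cdot\|_H^*)$, which holds because $\|\cdot\|_H^*=\tfrac12\|\cdot\|_T^*$ is equivalent to the ambient dual norm and $M(e)$ is norm-closed, and the norm-closedness of $P(e)$ guarantees $\pi\in P(e)$. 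The splitting $\mu=\nu+\mu(e)\pi$ with $\nu\in M(e)$, the identity $\mu(T^nx-\pi(x)e)=(S^*)^n\nu(x)$, the optimization over $\lambda$ giving $|(S^*)^n\nu(x)|\leq \|(S^*)^n\nu\|_T^*\cdot\tfrac12\|x\|_H$, and the bookkeeping $\|\nu\|_T^*\leq 2$ all combine exactly as you state to give $\|T^nx-\pi(x)e\|_T\leq (|||\tilde T|||_H)^n\|x\|_H$ by duality. The only cosmetic caveat is that $\|x\|_H$ in the statement must be read as $|||x+\mathbb{R}e|||_H$, which is how you use it.
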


\text{ }

Now we let $\Omega $ be a compact Hausdorff space and we set $X= C_\mathbb{R} ( \Omega )$, that is X is the space of all continuous real valued functions on $ \Omega $. $X$ is then a Banach space with the supremum norm $ \parallel \cdot \parallel_\infty .$ 
We put
$$ K= \lbrace f \in X \vert \text{ } f(w) \geq 0\ \forall w \in \Omega \rbrace $$
and we let the order unit $e$ be the constant function 1 on $\Omega . $ It is obvious that $K $ closed and normal cone with respect to $||\cdot ||_\infty $ and that $ 1 \in \mathrm{ Int }(K) .$

\text{ }

We have the following useful auxiliary lemma, which has also been proved in \cite[Example 1.6]{master} .

\begin{lemma} 
	For all $ f \in X ,$ we have 
	$$\parallel f \parallel_T = \parallel f \parallel_\infty .$$
\end{lemma}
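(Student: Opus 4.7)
The plan is to unpack the definitions of $M(f/e)$ and $m(f/e)$ using the specific structure of $X = C_\mathbb{R}(\Omega)$ with the pointwise order and $e \equiv 1$, and then to observe that the resulting expression is exactly the supremum norm.

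First I would translate the ordering: for $f \in X$ and $t \in \mathbb{R}$, the inequality $f \leq t e$ means $te - f \in K$, i.e.\ $t - f(w) \geq 0$ for every $w \in \Omega$, which is simply $\sup_{w \in \Omega} f(w) \leq t$. Hence
\[
M(f/e) \;=\; \inf\{t \in \mathbb{R} : f \leq t e\} \;=\; \sup_{w \in \Omega} f(w).
\]
Analogously, $f \geq te$ is equivalent to $\inf_{w \in \Omega} f(w) \geq t$, so
\[
m(f/e) \;=\; \sup\{t \in \mathbb{R} : f \geq t e\} \;=\; \inf_{w \in \Omega} f(w).
\]
(Compactness of $\Omega$ ensures both extrema are attained and finite, but the identities above hold even without that.)

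Second, I would combine these two identities with the definition of the Thompson norm:
\[
\|f\|_T \;=\; \max\bigl\{M(f/e),\,-m(f/e)\bigr\} \;=\; \max\Bigl\{\sup_{w \in \Omega} f(w),\; -\inf_{w \in \Omega} f(w)\Bigr\}.
\]
Since $-\inf_w f(w) = \sup_w (-f(w))$, the right-hand side equals $\sup_{w \in \Omega} |f(w)| = \|f\|_\infty$, which is the desired equality. There is no real obstacle here; the only point requiring care is the elementary identity $\max\{\sup f, \sup(-f)\} = \sup |f|$, which holds pointwise on $\Omega$ and is therefore preserved after taking suprema.
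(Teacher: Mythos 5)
Your proof is correct and follows essentially the same route as the paper: both arguments reduce $\|f\|_T$ to the pointwise condition $-t \le f(w) \le t$ for all $w\in\Omega$ and identify the resulting infimum with $\sup_{w}|f(w)|$. The only cosmetic difference is that you unpack the defining formula $\|f\|_T=\max\{M(f/e),-m(f/e)\}$ directly, whereas the paper first invokes the identification of $\|\cdot\|_T$ with the Minkowski functional of $I_e$ from Proposition~4.8; these are interchangeable by that same proposition.
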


\begin{proof}
	It holds that 
	$$ \parallel f \parallel_T= \inf \lbrace t>0 \vert\ f \in tI_e \rbrace $$
	since $\parallel \cdot \parallel_T$ is equal to the Minkowski functional with respect to $I_e $ by Proposition \ref{master}. Hence we obtain that
	$$ \parallel f \parallel_T = \inf \lbrace t>0 \vert\ f \in tI_e \rbrace ,$$
	$$=\inf \lbrace t>0 \vert\ -t\leq f(w) \leq t\ \forall w \in \Omega \rbrace ,$$
	$$=\inf \lbrace t>0 \vert\ \vert f(w)\vert \leq t\ \forall w \in \Omega \rbrace = \parallel f \parallel_\infty ,$$ which proves the lemma.
\end{proof} 

Let $M_r (\Omega )$ denote the space of all signed Radon measures  on $\Omega $ with the norm $ \parallel v \parallel = \vert v \vert ( \Omega ).$
For $v \in M_r(\Omega ), $ let $ \phi_v : C_\mathbb{R} (\Omega ) \rightarrow \mathbb{R}$ be defined by $$\phi_v (f) =\int_\Omega f dv.$$
\begin{flushleft}
	Then $v\rightarrow \phi_v $ is an isometric isomorphism of $ (M_r,( \Omega ), \parallel \cdot \parallel )$ onto  $(( C_\mathbb{R} (\Omega ))^{*}, \parallel \cdot \parallel ) .$ For the proof, see \cite[Remark 2.3]{master}.
	\begin{flushleft}
		Since $ (C_{\mathbb{R} } (\Omega ), ||\cdot ||_\infty  )^* $ is isometrically isomorphic to $M_r(\Omega )$ equipped with the total variation norm, it follows that $$ ||\mu||_T^* = |\mu |(\Omega ) \ \textrm{for all } \mu \in M_r (\Omega ).$$
	\end{flushleft}
	Also, it is clear that the dual cone and simplex in this case are $K^*=M_+(\Omega),$ 
	$$P(1)=\lbrace \mu \in M_+(\Omega) \ | \ \mu(\Omega)=1 \rbrace, $$
	so the simplex is the set of all probability Radon measures on $\Omega$.
\end{flushleft}

\text{ }

The material in the rest of this section contains the unpublished results from \cite[Section 9]{master} .\\

Let $ k: \Omega \times \Omega \rightarrow \mathbb{R} $ be a continuous non - negative function and let $\mu $ be a positive  Radon measure on $\Omega $ such that
$$\int_\Omega k(x,y) d \mu (y) >0 \ \ \textrm{for all } x \in \Omega .$$

\begin{flushleft}
	Put then $ \tilde{k}: \Omega * \Omega \rightarrow \mathbb{R} $  to be defined as 
\end{flushleft}
$$\tilde{k}(x,y)=\frac{k(x,y)}{\int_\Omega k(x,y) d \mu (y)} .$$
\begin{flushleft}
	Then $\tilde{k}$ is continuous, nonnegative and $\int_\Omega \tilde{k}(x,y) d \mu (y)=1.$ 
\end{flushleft}
\begin{flushleft}
	We consider now the integral operator $T_k$ on $C_{\mathbb{R} }(\Omega)$ given by
\end{flushleft}
\begin{center}
	$T_k (f) (x) =\int_\Omega \tilde{k} (x,y)f(y) d \mu (y) \ \ \textrm{for all } x \in \Omega .$
\end{center}
\begin{flushleft}
	It is straightforward to check that $T_k$ is a Markov operator with respect to $K$ and the constant function $1.$ The adjoint of $T_k,$ will be denoted by $ T_k^*$.\\
	Since $(C_\mathbb{R} (\Omega))^*=M_r(\Omega)$, where $M_r (\Omega )$ is the space of all signed Radon measures on $\Omega $, we have that $T_k^* $ is a bounded linear operator on  $M_r(\Omega).$\\
	
	\text{ }
	
	Now we are ready to present the main result in this section.
\end{flushleft}
\begin{proposition} \label{glavna} 
	Under the above assumptions, if $||\tilde{k}||_\infty < \frac{2}{\mu (\Omega )},$ then there exists a unique invariant probability Radon measure $ \tilde v $ on $ \Omega $ such that $$|(T_k^*)^n (v)-\tilde{v}| \ ( \Omega)\rightarrow 0 \mathrm{ \ as \ }  n\rightarrow \infty  $$ for all probability Radon measures $v$ on $ \Omega .$
\end{proposition}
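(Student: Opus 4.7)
The plan is to apply Theorem~\ref{theomaster} to the Markov operator $T_k$ (with cone $K$ of nonnegative functions and order unit $e=1$): once $|||\tilde{T}_k|||_H < 1$ is verified, the theorem supplies an invariant element $\pi \in P(1)$ with $\|(T_k^*)^n(v) - \pi\|_H^* \leq (|||\tilde{T}_k|||_H)^n$ for every $v \in P(1)$, and in this setting $\|\cdot\|_H^* = \tfrac{1}{2}|\cdot|(\Omega)$, so setting $\tilde v := \pi$ gives exactly the stated total-variation convergence; uniqueness is free, since any invariant probability measure $v$ equals $\lim_n (T_k^*)^n(v) = \pi$.

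The first step is to identify the Hilbert quotient norm on $C_{\mathbb{R}}(\Omega)/\mathbb{R}\cdot 1$ explicitly. By the auxiliary lemma $\|\cdot\|_T = \|\cdot\|_\infty$, and since the scalar $\lambda$ minimizing $\|g + \lambda\|_\infty$ is $-(\max g + \min g)/2$,
\begin{equation*}
|||g + \mathbb{R}\cdot 1|||_H = 2\inf_{\lambda\in\mathbb{R}} \|g+\lambda\|_\infty = \max g - \min g =: \mathrm{osc}(g).
\end{equation*}
Verifying $|||\tilde T_k|||_H < 1$ therefore reduces to a uniform oscillation-contraction estimate $\mathrm{osc}(T_k f) \leq C\,\mathrm{osc}(f)$ for some constant $C < 1$ independent of $f$.

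The main technical step is this contraction, and it is where the hypothesis enters. For $x_1,x_2 \in \Omega$ and any scalar $c$, the identity $T_k(1) = 1$ gives
\begin{equation*}
T_k(f)(x_1) - T_k(f)(x_2) = \int_\Omega [\tilde k(x_1,y) - \tilde k(x_2,y)] (f(y) - c)\, d\mu(y).
\end{equation*}
Choosing $c = (\max f + \min f)/2$ so that $\|f - c\|_\infty = \mathrm{osc}(f)/2$ reduces the task to bounding $\int_\Omega |\tilde k(x_1,y) - \tilde k(x_2,y)|\, d\mu(y)$. Using the identity $|a-b| = a + b - 2\min(a,b)$ together with $\int \tilde k(x_i,\cdot)\, d\mu = 1$, this integral equals $2 - 2\int_\Omega \min(\tilde k(x_1,y), \tilde k(x_2,y))\, d\mu(y)$; the elementary inequality $\min(a,b) \geq a + b - \|\tilde k\|_\infty$ then integrates to $\int \min\, d\mu \geq 2 - \|\tilde k\|_\infty\mu(\Omega)$, so that
\begin{equation*}
\int_\Omega |\tilde k(x_1,y) - \tilde k(x_2,y)|\, d\mu(y) \leq 2(\|\tilde k\|_\infty\mu(\Omega) - 1) =: 2C,
\end{equation*}
where the hypothesis $\|\tilde k\|_\infty < 2/\mu(\Omega)$ forces $C < 1$ (and $C \geq 0$ follows for free from $\|\tilde k\|_\infty \geq 1/\mu(\Omega)$, itself a consequence of $\int\tilde k\,d\mu = 1$). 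Combining yields $\mathrm{osc}(T_k f) \leq C\,\mathrm{osc}(f)$, hence $|||\tilde T_k|||_H \leq C < 1$.

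I expect the $\min$-integral estimate in the third step to be the only nontrivial point: it is precisely where the sharp threshold $2/\mu(\Omega)$ appears, and it converts the abstract contractivity requirement of Theorem~\ref{theomaster} into a concrete and easily checkable condition on the kernel. Once the contraction is in hand, invoking Theorem~\ref{theomaster} and translating back via $\|\cdot\|_H^* = \tfrac{1}{2}|\cdot|(\Omega)$ gives $|(T_k^*)^n(v) - \tilde v|(\Omega) \leq 2 C^n \to 0$ for every probability Radon measure $v$, completing the proof.
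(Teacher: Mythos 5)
Your proof is correct, and it reaches the hypothesis of Theorem \ref{theomaster} by a genuinely different route than the paper. The paper works entirely on the dual side: it uses Fubini to show $T_k^*(v)$ has density $\rho(y)=\int_\Omega \tilde{k}(x,y)\,dv(x)$ with respect to $\mu$, restricts to $v\in M(1)$ (where $v_+(\Omega)=v_-(\Omega)=\tfrac12|v|(\Omega)$), and bounds $|T_k^*(v)|(\Omega)$ via the crude inequality $|a-b|\leq\max(a,b)$ for $a,b\geq 0$, obtaining $\|S_k^*\|_H^*\leq\tfrac12\|\tilde{k}\|_\infty\mu(\Omega)$. You instead work on the primal side, identifying $|||\cdot|||_H$ on $C_{\mathbb R}(\Omega)/\mathbb{R}\cdot 1$ with the oscillation seminorm and running the classical Dobrushin-coefficient computation $\int_\Omega|\tilde{k}(x_1,y)-\tilde{k}(x_2,y)|\,d\mu(y)=2-2\int_\Omega\min(\tilde{k}(x_1,y),\tilde{k}(x_2,y))\,d\mu(y)\leq 2(\|\tilde{k}\|_\infty\mu(\Omega)-1)$; since the paper records $\|S^*\|_H^*=|||\tilde{T}|||_H$, the two computations estimate the same quantity. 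Both bounds drop below $1$ exactly at the threshold $\|\tilde{k}\|_\infty<2/\mu(\Omega)$, but yours is actually sharper in that regime ($\|\tilde{k}\|_\infty\mu(\Omega)-1<\tfrac12\|\tilde{k}\|_\infty\mu(\Omega)$ whenever the latter is $<1$), so you get a better convergence rate $2(\|\tilde{k}\|_\infty\mu(\Omega)-1)^n$; the paper's version has the advantage of producing the explicit density of $T_k^*(v)$, which is of independent interest. One cosmetic remark: to conclude that $\tilde v=\pi$ is itself invariant you should note that $T_k^*$ maps $P(1)$ into itself and apply the convergence estimate to $T_k^*\pi$, using continuity of $T_k^*$; this is a one-line addition and the paper's own proof is equally silent on it.
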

\begin{proof}
	Let $ v \in M_r(\Omega)  $ and $w=T_k^*(v) .$ Then we have that

	\begin{center}
		$\int_\Omega f(y) dw (y)= \ \int_\Omega T_k(f) dv(x)$ $ \ = \ \int_\Omega ( \int_\Omega \tilde{k} (x,y)f(y) d \mu (y) )dv(x) .$
	\end{center}
	Suppose that $v=v_+-v_-$ is the Jordan decomposition of $v$. Then 
	$$\int_{\Omega}\Big(\int_{\Omega}|\tilde{k}(x,y)f(y)|d\mu(y)\Big)dv_+(x) \leq||\tilde{k}||_\infty \ ||f||_\infty \ \mu(\Omega) v_+(\Omega) < \infty.$$
	since $u,v_+$ are Radon measures and $\Omega $ is compact. Similarly, \\
	$$\int_{\Omega}\Big(\int_{\Omega}|\tilde{k}(x,y)f(y)|d\mu(y)\Big)dv_-(x) < \infty .$$
	By Fubini's theorem, we may change the order of the integration and get
	$$\int_{\Omega} \Big(\int_{\Omega}\tilde{k}(x,y)f(y)d\mu(y)\Big)dv(x) =\int_{\Omega} \Big(\int_{\Omega}\tilde{k}(x,y)f(y)dv(x)\Big))d\mu(y).$$
	\begin{flushleft}
		This gives that 
	\end{flushleft}
	\begin{center}
		$\int_\Omega f(y) dw (y) =\int_\Omega f(y) \rho (y) d\mu (y)$  where $\rho(y) =\int_\Omega \tilde{k} (x,y)  dv(x).$
	\end{center}
	\begin{flushleft}
		Since this holds for all $f \in C_\mathbb{R} (\Omega) ,$ we deduce that $dw=\rho d\mu .$ Then we get $d|w|=|\rho | d\mu $ since $\mu $ is a  positive measure. So we obtain that
	\end{flushleft}
	\begin{center}
		$|T_k^*(v)|(\Omega)=|w|(\Omega)=\int_\Omega |\rho (y)| d\mu (y)= \int_\Omega |\int_\Omega \tilde{k} (x,y)  dv(x)| d\mu (y)  .$
	\end{center}

	
	\begin{flushleft}
		Hence, using that
		$||\cdot ||_H^*=\frac{1}{2}||\cdot||_T^* $
		by definition and that $||v||_T^*=|v|(\Omega)$ for all $v \in M_r(\Omega) ,$ we derive 
	\end{flushleft}
	$$||S_k^*||_H^*=||T_{k_{|M(1)}}^* ||_H^* = \sup_{v \in M(1) , \ v \neq 0 } \frac{||T_K^* (v) ||_H^*}{||v||_H^*} $$
	\begin{center}
		$$=\sup_{v \in M(1) , \ v \neq 0 } \frac{||T_K^* (v) ||_T^*}{||v||_T^*} =\sup_{v \in M(1) , \ v \neq 0 } \frac{|T_K^* (v) |(\Omega)}{|v|(\Omega )} $$
	\end{center}
	\begin{center}
		$$=\sup_{v \in M(1) , \ v \neq 0 } \frac{\int_\Omega | \int_\Omega \tilde{k}(x,y) dv_+ (x) - \int_\Omega \tilde{k}(x,y) dv_- (x)| d\mu (y)}{v_+ (\Omega)  + v_- (\Omega)} .$$ 
	\end{center}
	
	Now, if $ v \in M(1) ,$ then $ v( \Omega) = 0 ,$ hence $v_+(\Omega)=v_-(\Omega)= \frac{1}{2}|v|(\Omega). $ So we get 
	$$ |\int_\Omega \tilde{k}(x,y) dv_+ (x)- \int_\Omega \tilde{k}(x,y) dv_- (x)| $$
	\begin{center}
		$\leq \max \lbrace \int_\Omega \tilde{k}(x,y) dv_+ (x), \  \int_\Omega \tilde{k}(x,y) dv_- (x) \rbrace  $ 
	\end{center}
	\begin{center}
		$\leq \max \lbrace \int_\Omega ||\tilde{k}||_\infty dv_+ , \  \int_\Omega ||\tilde{k}||_\infty dv_- \rbrace  $ 
	\end{center}
	\begin{center}
		$= \max \lbrace  ||\tilde{k}||_\infty v_+(\Omega ) , \  ||\tilde{k}||_\infty v_- (\Omega ) \rbrace  $ 
	\end{center}
	\begin{center}
		$=\frac{1}{2} ||\tilde{k}||_\infty |v|(\Omega )$ for all $y\in \Omega.$
	\end{center}
	In the first inequality we have used that $ \int_\Omega \tilde{k}(x,y) dv_+ (x), \  \int_\Omega \tilde{k}(x,y) dv_- (x) \ \geq 0 $ for all $ y \in \Omega $ since $ \tilde{k} \geq 0 .$ Hence we obtain that
	$$||T_{k_{|M(1)}}^* ||_H^*=\sup_{v \in M(1) , \ v \neq 0 } \frac{|T_K^* (v) |(\Omega)}{|v|(\Omega )}  $$
	\begin{center}
		$$ \leq \sup_{v \in M(1) , \ v \neq 0 } \frac{\int_\Omega \frac{1}{2} ||\tilde{k}||_\infty |v|(\Omega ) d\mu }{|v|(\Omega)}= \frac{1}{2}||\tilde{k}||_\infty \mu (\Omega ).$$
	\end{center}
	
	Therefore, if $\tilde{k} $ is such that $||\tilde{k}||_\infty < \frac{2}{\mu (\Omega )} ,$ then $|||\tilde{T}_k|||_H<1.$ By Theorem \ref{theomaster}, the proposition follows.
\end{proof}

\begin{flushleft}
	As a concrete example, let now $\Omega =[0,2 \pi ], \ \mu$ be the Lebesgue measure on $ [0, 2\pi]$ and $k:\Omega \ \times  \ \Omega \rightarrow  \mathbb{R} $ be given as $k(x,y)=\frac{1}{4} sin (\frac{1}{4}(x+y)).$ Then $k$ is continuous and $k(x+y) \geq 0 \ $ for all $ x,y \in [0,2 \pi ]$. Furthermore, we have that
\end{flushleft}
$$\int_\Omega k(x,y) d\mu (y)= \int_0^{2\pi} k(x,y)dy =cos(\frac{1}{4}x) + sin(\frac{1}{4}x) .$$
Using elementary calculus, it can be easily checked that 
$$\cos {(\frac{1}{4} x)}+\sin {(\frac{1}{4}x)}\geq 1 $$
for all $x \in [0,2\pi ].$ In particular, 
$$\cos {(\frac{1}{4}x)}+\sin {(\frac{1}{4}x)}> 0 $$
for all $x \in [0,2\pi ],$ so we can then define $\tilde{k}:\Omega \times \Omega \rightarrow \mathbb{R} $ as described above by letting 
$$\tilde{k}(x,y)=\frac{k(x,y)}{\int_{\Omega}k(x,y)d\mu (y)} = \frac{k(x,y)}{\cos{(\frac{1}{4}x)+\sin (\frac{1}{4}x)}} .$$ 
\begin{flushleft}
	Since $\cos(\frac{1}{4}x)+\sin(\frac{1}{4}x) \geq 1 $ for all $x \in [0,2\pi ],$ it follows that  $$||\tilde{k}||_\infty =\sup_{x,y \in [0,2\pi ]} \frac{\frac{1}{4}\mathrm{sin}(\frac{1}{4}(x+y))}{(\cos(\frac{1}{4}x)+\sin(\frac{1}{4}x ))}\leq \frac{1}{4},$$
\end{flushleft}
so $$||\tilde{k}||_\infty \leq \frac{1}{4} < \frac{1}{\pi }=\frac{2}{2\pi }=\frac{2}{\mu (\Omega )} ,$$ since $\Omega=[0,2\pi ] $ and $\mu $ is the Lebesgue measure.

\section{Spaceability of cones in the space of Radon measures }
\label{sec:4} 

In this section, for each $\mu,\nu\in M(\Omega) ,$ we say that $\mu\sim\nu$ if there are measurable mutually disjoint subsets $B_1,B_2$ of $\Omega$ such that 
$$|\mu|(B_1^c)=|\nu|(B_2^c)=0.$$


Now we recall the following definition.
\begin{definition}\label{dd}
	\cite[Definition 4.1]{MIA} Let $\mathcal{E}$ be a topological vector space. We say that a relation $\sim$ on $\mathcal{E}$ has property $(D)$ if the following conditions hold:
	\begin{enumerate}
		\item If $(x_n)$ is a sequence in $\mathcal{E}$ such that $x_n\sim x_m$ for all distinct index $m,n$, then for each disjoint finite subsets $A,B$ of $\mathbb{N}$ we have 
		$$\sum_{n\in A}\alpha_nx_n\sim \sum_{m\in B}\beta_m x_m,$$
		where $\alpha_n$ and $\beta_m$'s are arbitrary scalars.
		\item If a sequence $(x_n)$ converges to $x$ in $\mathcal{E}$ and for some $y\in \mathcal{E}$, $x_n\sim y$ for all $n\in\mathbb{N}$, then $x\sim y$.
	\end{enumerate}
\end{definition}

Then we present the following proposition.

\begin{proposition}
	The relation $\sim$ defined as above satisfies the conditions of Definition \ref{dd}.
\end{proposition}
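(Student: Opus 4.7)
The relation $\mu\sim\nu$ is nothing but mutual singularity. The plan is to verify the two conditions of Definition \ref{dd} by, in each case, assembling a single pair of disjoint supporting Borel sets out of the pairwise data supplied by the hypotheses, and then invoking only two basic properties of the total variation: the subadditivity $|\alpha\mu+\beta\nu|\le|\alpha||\mu|+|\beta||\nu|$ and the $1$-Lipschitz estimate $\bigl||\mu|(E)-|\nu|(E)\bigr|\le\|\mu-\nu\|$ on any fixed measurable $E$.

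For condition (1), I would fix the finite set $A\cup B$ and, for every pair $n\ne m$ in it, use $x_n\sim x_m$ to obtain disjoint Borel sets $B_{n,m},B_{m,n}$ supporting $|x_n|$ and $|x_m|$ respectively. The key move is then to intersect, for each $n$, the sets $B_{n,m}$ over all $m\ne n$ in $A\cup B$, producing a Borel set $E_n$ that still supports $x_n$ (finite union bound on $|x_n|$) and is automatically disjoint from every other $E_m$ because $E_n\subseteq B_{n,m}$, $E_m\subseteq B_{m,n}$ and $B_{n,m}\cap B_{m,n}=\varnothing$. Once the $E_n$'s are pairwise disjoint supports, $\sum_{n\in A}\alpha_n x_n$ lives on $\bigcup_{n\in A}E_n$ and $\sum_{m\in B}\beta_m x_m$ on $\bigcup_{m\in B}E_m$, which are disjoint since $A\cap B=\varnothing$. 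The finiteness of $A\cup B$ is used precisely because a finite intersection is still inside the support of $x_n$.

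For condition (2), I would run the same idea across the countable family: for each $n$ pick a disjoint pair $(B_{1,n},B_{2,n})$ witnessing $x_n\sim y$, and put $C:=\bigcap_n B_{2,n}$. By countable subadditivity $|y|(C^{c})=0$, so $y$ lives in $C$. Because each pair is disjoint, $B_{1,n}\subseteq B_{2,n}^{c}\subseteq C^{c}$, hence $|x_n|(C)=0$ for every $n$. Passing to the limit via the total-variation continuity $|x_n|(C)\to|x|(C)$ forces $|x|(C)=0$, i.e.\ $x$ is supported on $C^{c}$, and the pair $(C^{c},C)$ is the required witness of $x\sim y$.

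The only step that is not pure bookkeeping is the continuity argument in (2); I expect it to be the main (minor) obstacle only in the sense that one must remember it genuinely uses \emph{total-variation} convergence. Specifically, the partition-supremum definition of $|\cdot|$ yields $|\mu|(E)\le|\nu|(E)+|\mu-\nu|(E)$ for any measurable $E$, and by symmetry the $1$-Lipschitz estimate above; without total-variation convergence (e.g.\ under weak-$*$ convergence only) this step would fail and condition (2) would become delicate.
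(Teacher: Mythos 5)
Your proposal is correct and follows essentially the same route as the paper: intersect the pairwise witnessing sets to produce common supports, control them by finite (resp.\ countable) subadditivity of the total variation, and in condition (2) pass to the limit using $\bigl||\mu|(E)-|\nu|(E)\bigr|\le|\mu-\nu|(E)\le\|\mu-\nu\|$. The only (cosmetic) difference is that in condition (1) you make all the supports $E_n$, $n\in A\cup B$, pairwise disjoint at once, whereas the paper only separates the support of the $A$-sum from that of the $B$-sum via a union of intersections; both arguments are sound.
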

\begin{proof}
	Let $\{\mu_n\}_n$ be a sequence in $M(\Omega)$ such that $\mu_n\sim\mu_m$ for all distinct indexes $m,n$. Let $A,B$ be finite mutually disjoint subsets of $\mathbb{N}$. For any fixed element $j\in A$ and each $m\in B$, there exist disjoint Borel subsets $D_{(j,m)}$ and $D'_{(j,m)}$ of $\Omega$ such that $|\mu_j|(D^c_{(j,m)})=|\mu_m|(D'^c_{(j,m)})=0$ and $D_{(j,m)} \cap D'_{(j,m)}=\varnothing$. It follows that 
	$$|\mu_j|(\bigcup_{m\in B}D_{(j,m)}^c)\leq \sum_{m\in B}|\mu_j|(D_{(j,m)}^c)=0.$$
	Hence, we get for all $i\in A$ that $|\mu_i|(\cap_{j\in A}\cup_{m\in B}D_{(j,m)}^c)=0$, so 
	\begin{align*}
		|\sum_{i\in A}\mu_i|(\cap_{j\in A}\cup_{m\in B}D_{(j,m)}^c)\leq \sum_{i\in A} \vert \mu_i \vert (  \cap_{j\in A}\cup_{m\in B}D_{(j,m)}^c)=0.
	\end{align*}
	Similarly, 
	$$\left|\sum_{i \in B}\mu_i\right|\left(\bigcap_{m\in B}\bigcup_{j\in A}D'^c_{(j,m)}\right)=0.$$
	Since we have that 
	$$(\cup_{j\in A}\cap_{m\in B}D_{(j,m)})\cap (\cup_{m\in B}\cap_{j\in A}D'_{(j,m)})=\varnothing,$$
	because $D_{(j,m)}\cap D_{(j,m)}'=\varnothing$ for all $j\in A$ and $m\in B$, 
	we conclude that $\sum_{i\in A}\mu_i\sim \sum_{m\in B}\mu_m$. By these arguments, it is not hard to see that we actually have $\sum_{i\in A} \alpha_i \mu_i\sim \sum_{m\in B} \beta_m \mu_m$ for all scalars $ \alpha_i , \beta_m $ with $ i \in A, m \in B ,$ since $| \alpha \mu|= | \alpha||\mu| $ for every scalar $ \alpha $ and every Radon measure $ \mu $ on $ \Omega .$ Thus, the condition $(1)$ of  Definition \ref{dd} is satisfied. \\
	
	Suppose now that $\{\mu_n\}_n$ is a sequence in $M(\Omega)$, and $\nu\in M(\Omega)$ such that $\mu_n\sim \nu$ for all $n$. Then, for each $n\in\mathbb{N}$ there exist measurable subsets $D_n$ and $D_n'$ such that $D_n\cap D_n'=\varnothing$ and $|\mu_n|(D_n^c)=|\nu|(D_n'^c)=0$.
	Hence, for every $m\in\mathbb{N}$ we have $|\nu|(\cup_{n=1}^mD_n'^c)=0$. This implies that $$|\nu|(\cup_{n=1}^\infty D_n'^c)=\lim_{m\rightarrow\infty}|\nu|(\cup_{n=1}^mD_n'^c)=0.$$
	Moreover, for each $n\in\mathbb{N}$ we have $|\mu_n|(\cap_{m=1}^\infty D_m^c)=0$. Since $D_n\cap D_n'=\varnothing$ for all $n$, we get 
	$$(\bigcup_{m=1}^\infty D_m)\cap (\cap_{n=1}^\infty D_n')=\varnothing.$$
	Now, if $\mu_n\rightarrow \mu$ as $n\rightarrow\infty$ for some $\mu\in M(\Omega)$, then 
	\begin{align*}
		|\mu|(\cap_{n=1}^\infty D_m^c)&\leq |\mu-\mu_n|(\cap_{m=1}^\infty D_m^c)+|\mu_n|(\cap_{m=1}^\infty D_m^c)\\
		&=|\mu-\mu_n|(\cap_{m=1}^\infty D_m^c)\rightarrow 0,
	\end{align*}
	as $n\rightarrow\infty$. Hence, 
	$$|\mu|(\cap_{n=1}^\infty D_m^c)=0.$$
	Thus, $\mu\sim\nu$, which proves the condition (2) in  Definition \ref{dd}.
\end{proof}
We recall that a subset $S$ of a Banach space $Y$ is called \emph{spaceable} in $Y$ if $S\cup\{0\}$ contains a closed  infinite-dimensional subspace of $Y$. In this section, a subset $B$ of a vector space is called a pseudocone if for each scalar $c,$ $cB \subseteq B.$ For a Borel measurable subset $E$ and some $\mu\in M(\Omega)$, we let $\mu_E$ be the measure given by $\mu_E(B):=\mu(B\cap E)$ for every Borel subset $B$ of $\Omega$. If $K$ is a pseudocone in $M(\Omega)$, we denote 
$$\widetilde{K}:=\{\mu_E:\,\mu\in K,\,E\text{ is Borel}\}.$$
Since for every scalar $\lambda\in\mathbb{C}$ we have $(\lambda \mu)_E=\lambda\,\mu_E$, it follows that $\widetilde{K}$ is a pseudocone since $K$ is a pseudocone. Moreover, $K\subseteq \widetilde{K}$.\\
Next we recall the following theorem.
\begin{theorem}\label{322}
	\cite[Theorem 4.2]{MIA}  Let $(\mathcal{E},\|\cdot\|)$ be a Banach space, $\sim$ be a relation on $\mathcal{E}$ with property $(D)$, and $K$ be a nonempty
	subset of $\mathcal{E}$. Assume that:
	\begin{enumerate}	
		\item there is a constant $k>0$ such that $\|x+y\|\geq k\,\|x\|$ for all $x,y\in \mathcal{E}$ with $x\sim y$;
		\item  $K$ is a pseudocone;
		\item if $x,y\in \mathcal{E}$ such that $x+y\in K$ and  $x\sim y$ then $x,y\in K$;
		\item there is an infinite sequence $\{x_n\}_{n=1}^\infty\subseteq \mathcal{E}\setminus K$ such that for each distinct $m,n\in\mathbb{N}$, $x_m\sim x_n$.
	\end{enumerate}
	Then, $\mathcal{E}\setminus K$ is spaceable in $\mathcal{E}$.
\end{theorem}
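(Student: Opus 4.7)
The plan is to construct the required closed infinite-dimensional subspace directly as $M:=\overline{\operatorname{span}}\{x_n\}_{n=1}^{\infty}$, where $\{x_n\}$ is the sequence provided by hypothesis $(4)$, and then to verify that $M\setminus\{0\}\subseteq\mathcal{E}\setminus K$. The key observation is that the combination of $(1)$ with $(D)(1)$ forces $\{x_n\}$ to be a basic sequence in $\mathcal{E}$, while $(D)(2)$ together with $(3)$ and the pseudocone property of $K$ rule out nonzero limits landing in $K$.

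The first step is to derive the basic-sequence property. For any scalars $\gamma_1,\ldots,\gamma_n$ and any $m<n$, applying $(D)(1)$ with $A=\{1,\ldots,m\}$ and $B=\{m+1,\ldots,n\}$ yields $\sum_{i\le m}\gamma_i x_i\sim\sum_{i>m}\gamma_i x_i$, and then $(1)$ gives
$$\Bigl\|\sum_{i=1}^{n}\gamma_i x_i\Bigr\|\geq k\,\Bigl\|\sum_{i=1}^{m}\gamma_i x_i\Bigr\|,$$
which is the standard basis inequality with basis constant at most $1/k$. Linear independence of $\{x_n\}$ follows at once: if $\sum_{i=1}^{N}\alpha_i x_i=0$ for some $\alpha_j\neq 0$, then the sum lies in $K$ (because $0\in K$, since $K$ is a pseudocone), and the argument of the next paragraph applied to this sum forces $x_j\in K$, contradicting $(4)$. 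Consequently $M$ is a closed infinite-dimensional subspace of $\mathcal{E}$, and every $z\in M$ admits a unique norm-convergent expansion $z=\sum_{n=1}^{\infty}\alpha_n x_n$.

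Next, assume for contradiction that some $z\in M\cap K$ satisfies $z\neq 0$, and let $j$ be the smallest index with $\alpha_j\neq 0$. For each $N>j$, $(D)(1)$ applied with $A=\{j\}$ and $B=\{j+1,\ldots,N\}$ gives $\sum_{i=j+1}^{N}\alpha_i x_i\sim\alpha_j x_j$. Since the left-hand side converges in norm to $z-\alpha_j x_j$, hypothesis $(D)(2)$ upgrades this to $z-\alpha_j x_j\sim\alpha_j x_j$. Their sum $z$ lies in $K$, so $(3)$ forces $\alpha_j x_j\in K$, and the pseudocone property then yields $x_j=(1/\alpha_j)(\alpha_j x_j)\in K$, contradicting $(4)$. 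Hence $M\setminus\{0\}\subseteq\mathcal{E}\setminus K$, which is the definition of spaceability of $\mathcal{E}\setminus K$ in $\mathcal{E}$.

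The main delicacy I anticipate is at the passage to the limit: one must apply $(D)(2)$ with the correct orientation, treating $\alpha_j x_j$ as the fixed partner and the partial sums $\sum_{i=j+1}^{N}\alpha_i x_i$ as the varying sequence converging to $z-\alpha_j x_j$. This orientation is legitimate because $(D)(1)$ produces the $\sim$ relation symmetrically in the roles of $A$ and $B$, so the necessary symmetry on finitely supported combinations of $\{x_n\}$ is automatic. Aside from this observation, the proof is a clean combination of the basic-sequence inequality with the pseudocone property, which lets us strip the scalar $\alpha_j$ in the final step to return to a contradiction with $(4)$.
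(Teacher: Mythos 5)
Your proof is correct. Note first that the paper does not actually prove this statement: Theorem \ref{322} is quoted verbatim from \cite[Theorem 4.2]{MIA} and used as a black box, so there is no in-paper argument to compare against. Your reconstruction is the natural (and, as far as I can tell, the intended) one: take $M=\overline{\operatorname{span}}\{x_n\}$, use $(D)(1)$ together with hypothesis $(1)$ to get the Grunblum inequality $\bigl\|\sum_{i=1}^{m}\gamma_i x_i\bigr\|\leq k^{-1}\bigl\|\sum_{i=1}^{n}\gamma_i x_i\bigr\|$, conclude that $\{x_n\}$ is a basic sequence, and then use $(D)(2)$, hypothesis $(3)$ and the pseudocone property to show $M\cap K=\{0\}$. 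You handle the genuinely delicate points correctly: the vectors $x_n$ are nonzero because $K$ nonempty and pseudocone forces $0\in K$ while $x_n\notin K$; the possible asymmetry of $\sim$ is irrelevant on finite combinations of the $x_n$ since $(D)(1)$ is symmetric in $A$ and $B$; and $(D)(2)$ is applied with the fixed partner $\alpha_j x_j$ and the varying partial sums $\sum_{i=j+1}^{N}\alpha_i x_i\to z-\alpha_j x_j$, exactly as its statement requires. The final step, stripping the scalar via $x_j=(1/\alpha_j)(\alpha_j x_j)\in K$, is where the pseudocone hypothesis (closure under \emph{all} scalars, not just positive ones) is essential, and you use it correctly. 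The only cosmetic redundancy is the separate linear-independence argument, which already follows from the basis inequality applied to a vanishing finite combination; this does not affect correctness.
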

Then we give the following proposition.
\begin{proposition} \label{prop43}
	Let $K$ be a pseudocone in $M(\Omega)$. If there exists a sequence of mutually disjoint Borel subsets $\{E_n\}_{n\in\mathbb N}$ of $\Omega$ such that for all $n$ we have
	$$\{\mu_{E_n}:\,\mu\in\widetilde{K}\}\neq \{\mu_{E_n}:\, \mu\in M(\Omega)\},$$
	then $M(\Omega)\setminus\tilde{K}$ (and consequently $M(\Omega)\setminus K$) is spaceable in $M(\Omega)$. 
\end{proposition}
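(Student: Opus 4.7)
The plan is to apply Theorem \ref{322} with the ambient Banach space $\mathcal{E} = M(\Omega)$ equipped with the total variation norm, the relation $\sim$ introduced at the start of this section (already shown to have property $(D)$), and the role of the pseudocone in the theorem played by $\widetilde{K}$. Once $M(\Omega)\setminus\widetilde{K}$ is shown to be spaceable, the inclusion $K\subseteq\widetilde{K}$ gives $M(\Omega)\setminus\widetilde{K}\subseteq M(\Omega)\setminus K$, so the closed infinite-dimensional subspace witnessing spaceability lies automatically in the complement of $K$ as well.

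For condition $(1)$, if $\mu\sim\nu$ with witnesses $B_1,B_2$ (disjoint Borel sets with $|\mu|(B_1^c)=|\nu|(B_2^c)=0$), then $|\nu|(B_1)=0$, so $|\mu+\nu|(B_1)=|\mu|(B_1)=\|\mu\|$; hence $\|\mu+\nu\|\geq\|\mu\|$ and the constant $k=1$ works. Condition $(2)$, that $\widetilde{K}$ is a pseudocone, was observed just before the statement. For condition $(3)$, assume $\mu\sim\nu$ with disjoint supports $B_1,B_2$ and $\mu+\nu=\eta_F\in\widetilde{K}$ for some $\eta\in K$ and Borel $F$. Since $|\mu|(B_1^c)=0$ and $|\nu|(B_1)=0$, for every Borel $A$ one has
$$\mu(A)=(\mu+\nu)(A\cap B_1)=\eta_F(A\cap B_1)=\eta(A\cap B_1\cap F)=\eta_{B_1\cap F}(A),$$
so $\mu=\eta_{B_1\cap F}\in\widetilde{K}$; symmetrically $\nu\in\widetilde{K}$.

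The main point is condition $(4)$. Using the hypothesis, for each $n\in\mathbb{N}$ pick $\nu^{(n)}\in M(\Omega)$ with $\nu^{(n)}_{E_n}\notin\{\rho_{E_n}:\rho\in\widetilde{K}\}$, and set $\mu_n:=\nu^{(n)}_{E_n}$. Because $|\mu_n|(E_n^c)=0$ and the sets $\{E_n\}$ are pairwise disjoint, the pair $(\mu_m,\mu_n)$ for distinct indices admits the witnesses $(E_m,E_n)$, giving $\mu_m\sim\mu_n$. To see that $\mu_n\notin\widetilde{K}$, observe that $\mu_n$ is supported on $E_n$, so $(\mu_n)_{E_n}=\mu_n$; if $\mu_n$ belonged to $\widetilde{K}$, then $\mu_n=(\mu_n)_{E_n}$ would lie in $\{\rho_{E_n}:\rho\in\widetilde{K}\}$, contradicting the choice of $\nu^{(n)}$. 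All four hypotheses being verified, Theorem \ref{322} yields that $M(\Omega)\setminus\widetilde{K}$ is spaceable in $M(\Omega)$, and the proposition follows.

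The only step that requires genuine content beyond bookkeeping is the construction in condition $(4)$: one must convert the strict inclusion $\{\mu_{E_n}:\mu\in\widetilde{K}\}\subsetneq\{\mu_{E_n}:\mu\in M(\Omega)\}$ into an explicit "bad" element supported on $E_n$, and then use the fact that a measure concentrated on $E_n$ is unaffected by restriction to $E_n$ to see that membership in $\widetilde{K}$ would immediately contradict the choice. The remaining conditions reduce to the elementary additivity properties of the total variation applied to measures with disjoint supports.
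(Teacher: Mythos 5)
Your proposal is correct and follows the same route as the paper: apply Theorem \ref{322} to $\widetilde{K}$ with the relation $\sim$ (whose property $(D)$ was established earlier) and verify its four hypotheses, then pass from $M(\Omega)\setminus\widetilde{K}$ to $M(\Omega)\setminus K$ via $K\subseteq\widetilde{K}$. Your verification of condition $(1)$ via $|\mu+\nu|(B_1)=|\mu|(B_1)=\|\mu\|$ is a slightly more direct computation than the paper's partition argument, and you make explicit the construction of the sequence for condition $(4)$ that the paper leaves to the reader; both are fine.
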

\begin{proof}
	We show that the relation $\sim$ satisfies the conditions of  Theorem \ref{322}. Let $\mu,\nu\in M(\Omega)$, and suppose that $\mu\sim\nu$. Then, there exist Borel subsets $B_1,B_2$ of $\Omega$ such that $B_1\cap B_2=\varnothing$ and $|\mu|(B_1^c)=|\nu|(B_2^c)=0$. Let $\{E_n\}_{n=1}^N$ be a partition of $\Omega$. Then, $\{E_n\cap B_1\}_{n=1}^N\bigcup \{B_1^c\}$ is also a partition of $\Omega$. Moreover, since $B_1\cap B_2=\varnothing$, we have $B_1\subseteq B_2^c$, hence 
	\begin{align*}
		\sum_{n=1}^N|(\mu+\nu)(E_n\cap B_1)|+|(\mu+\nu)(B_1^c)|&=\sum_{n=1}^N|\mu(E_n\cap B_1)|+|\nu(B_1^c)|\\
		&\geq \sum_{n=1}^N|\mu(E_n\cap B_1)|=\sum_{n=1}^N |\mu(E_n)|.
	\end{align*}
	In particular, $|\mu+\nu|(\Omega)\geq \sum_{n=1}^N |\mu(E_n)|$. Since $\{E_n\}_{n=1}^N$ was an arbitrary partition of $\Omega$, we deduce that $|\mu+\nu|(\Omega)\geq |\mu|(\Omega)$. So, the condition (1) in Theorem \ref{322} holds. \\
	
	Next, let $\mu , v \in M(\Omega) $ such that $\mu + v \in \tilde{K} $ and $\mu \sim v .$ Let $B_{1}, B_{2} $ be Borel subsets of $\Omega $ such that $B_{1} \cap B_{2} = \varnothing $ and $\vert \mu \vert (B_{1}^{c})= \vert v \vert (B_{2}^{c})=0  .$ Then, again since $B_{1} \subseteq B_{2}^{c} $ and $B_{2} \subseteq B_{1}^{c}  ,$ we get $(\mu + v)_{B_{1}} = \mu_{B_{1}} + v_{B_{1}}= \mu_{B_{1}}=\mu $ and, similarly, $(\mu + v)_{B_{2}} = v_{B_{2}}=v .$ However, since $\mu + v \in \tilde{K} ,$ by the definition of $ \tilde{K} $ there exists some measure $\eta \in K $ and some Borel subset $E$ of $ \Omega$ such that $\eta_{E}= \mu + v .$ Hence, $\mu=(\mu + v)_{B_{1}}= \eta_{E \cap B_{1}} \in \tilde{K} $ and $v=(\mu + v)_{B_{2}}= \eta_{E \cap B_{2}} \in \tilde{K}  .$ Thus, $ \mu, v \in \tilde{K},$ so the condition $(3)$ of Theorem \ref{322} is satisfied.\\
	
	By the assumptions in the proposition, the conditions (2) and (4) of Theorem \ref{322} are also satisfied, so the proposition follows.
\end{proof}

\begin{corollary}
	Let $K$ be the pseudocone of all scalar multiples of positive Radon measures on a locally compact Hausdorff space $\Omega $ with infinite cardinality. Then $ M(\Omega) \setminus K$ is spaceable in $  M(\Omega).$ Consequently, the complement of the cone of all positive Radon measures on $ \Omega $ is spaceable in  $  M(\Omega).$
\end{corollary}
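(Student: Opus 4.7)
The plan is to apply Proposition \ref{prop43} to the pseudocone $K$. The first step is to compute $\widetilde{K}$: if $\mu = c\nu$ with $\nu \in M_+(\Omega)$ and $c$ a scalar, then for every Borel set $E \subseteq \Omega$ we have $\mu_E = c\,\nu_E$ with $\nu_E \in M_+(\Omega)$, so $\widetilde{K} \subseteq K$; the reverse inclusion is trivial (take $E = \Omega$). Hence $\widetilde{K} = K$, and it suffices to exhibit a sequence of pairwise disjoint Borel sets $\{E_n\}_{n \in \mathbb{N}}$ such that for each $n$ some restriction $\mu_{E_n}$ with $\mu \in M(\Omega)$ fails to be a scalar multiple of a positive Radon measure.

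Since $\Omega$ is infinite and Hausdorff, I would pick a countable collection of distinct points $\{z_n\}_{n\in\mathbb{N}} \subseteq \Omega$ and set $E_n := \{z_{2n-1}, z_{2n}\}$. Singletons are closed in a Hausdorff space, so every finite set is Borel, and the sets $\{E_n\}$ are pairwise disjoint by construction. For each $n$, consider the signed Radon measure $\mu_n := \delta_{z_{2n-1}} - \delta_{z_{2n}} \in M(\Omega)$. Then $(\mu_n)_{E_n} = \mu_n$, so $\mu_n$ belongs to $\{\mu_{E_n} : \mu \in M(\Omega)\}$. On the other hand, if we had $\mu_n = c\,\eta$ with $\eta \in M_+(\Omega)$ and $c$ a scalar, evaluating on the singletons $\{z_{2n-1}\}$ and $\{z_{2n}\}$ would force $c\,\eta(\{z_{2n-1}\}) = 1$ and $c\,\eta(\{z_{2n}\}) = -1$; since $\eta(\{z_{2n-1}\})$ and $\eta(\{z_{2n}\})$ are non-negative real numbers, no single scalar $c$ can satisfy both equalities. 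Hence $\mu_n \notin \widetilde{K}$, so the two sets in the hypothesis of Proposition \ref{prop43} differ, and that proposition yields the spaceability of $M(\Omega) \setminus K$ in $M(\Omega)$.

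For the second assertion, observe that the cone $M_+(\Omega)$ of positive Radon measures is contained in $K$ (every positive Radon measure is $1$ times itself), so $M(\Omega) \setminus K \subseteq M(\Omega) \setminus M_+(\Omega)$; any closed infinite-dimensional subspace witnessing the spaceability of the former automatically witnesses that of the latter. The main subtlety is the choice of the sets $E_n$: they must contain at least two points, since if $E_n$ were a singleton every restriction $\mu_{E_n}$ would be a scalar multiple of a Dirac measure and hence already lie in $K$. This is precisely what forces the use of the infinite cardinality of $\Omega$ to extract infinitely many disjoint pairs of distinct points.
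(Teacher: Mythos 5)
Your argument is correct and follows essentially the same route as the paper: verify $\widetilde{K}=K$, choose pairwise disjoint two-point Borel sets, exhibit on each a signed combination of Dirac measures that cannot be a scalar multiple of a positive measure, and invoke Proposition \ref{prop43} (the paper uses $\delta_{x_n}+i\delta_{y_n}$ where you use $\delta_{z_{2n-1}}-\delta_{z_{2n}}$, an immaterial difference). Your explicit justification of the final ``consequently'' via $M_+(\Omega)\subseteq K$ is a welcome detail the paper leaves implicit.
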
	

\begin{proof}
	Notice first that $K = \tilde{K} $ since $(\lambda \mu)_{E} = \lambda \mu_{E} $ for all $\lambda \in \mathbb{C}, $ $\mu \in M(\Omega)$ and any Borel subset $E.$ Since $ \Omega $ consists of infinitely many distinct elements by assumption,  we can find two mutually disjoint sequences $( x_{n} )_{n} $ and $( y_{n} )_{n} $ in $\Omega $ where $x_{n} \neq x_{m}$ and $y_{n} \neq y_{m}$ whenever $n \neq m.$ \\
	For each $ n \in \mathbb{N},$ set $\mu_{n} = \delta_{x_{n}} + i \delta_{y_{n}} $ where $\delta$ denotes the Dirac measure. It is not hard to see that $ \mu_{n} \notin M(\Omega) \setminus K$ for all $n \in \mathbb{N} ,$ so the sequence $ ( \mu_{n} )_{n} $ satisfies the condition (4) of Theorem \ref{322}.  Since $K=\tilde{K} ,$ setting $E_{n}= \lbrace x_{n} \rbrace \cup \lbrace y_{n} \rbrace $ for each $n \in \mathbb{N}$  we can apply Proposition \ref{prop43} to deduce that $ M(\Omega) \setminus K $ is spaceable in $ M(\Omega) .$
\end{proof}


\bibliographystyle{amsplain}

\end{document}